% SIAM Article Template
\documentclass[hidelinks,onefignum,onetabnum]{siamart220329}
%\documentclass[review,hidelinks,onefignum,onetabnum]{siamart220329}

% Information that is shared between the article and the supplement
% (title and author information, macros, packages, etc.) goes into
% ex_shared.tex. If there is no supplement, this file can be included
% directly.

%\input{ex_shared}

% Table float box with bottom caption, box width adjusted to content
%\newfloatcommand{capbtabbox}{table}[][\FBwidth]

% Optional PDF information
\ifpdf
\hypersetup{
  pdftitle={Fully discrete energy-dissipative and conservative discrete gradient particle methods for a class of continuity equations},
  pdfauthor={J. Hu, S.Q. Van Fleet,  A. T. S. Wan}
}
\fi

% The next statement enables references to information in the
% supplement. See the xr-hyperref package for details.

%\externaldocument[][nocite]{ex_suppement}

% FundRef data to be entered by SIAM
%<funding-group specific-use="FundRef">
%<award-group>
%<funding-source>
%<named-content content-type="funder-name"> 
%</named-content> 
%<named-content content-type="funder-identifier"> 
%</named-content>
%</funding-source>
%<award-id> </award-id>
%</award-group>
%</funding-group>

\usepackage[page,header]{appendix}
\usepackage{titletoc}

\newcommand{\rd}{\,\mathrm{d}}

\newcommand\ip[2]{\left\langle #1,#2 \right\rangle}
\newcommand\ipB[2]{\mathcal{B}[f]\left( #1,#2 \right)}

\newcommand{\bX}{\mathbf X}
\newcommand{\bbx}{\bm x}
\newcommand{\bby}{\bm y}

\numberwithin{equation}{section}
%\newtheorem{theorem}{Theorem}[section]
%\newtheorem{lemma}[theorem]{Lemma}
%\newtheorem{corollary}[theorem]{Corollary}
%\newtheorem{proposition}[theorem]{Proposition}
%\newtheorem{definition}[theorem]{Definition}
%\newtheorem{remark}[theorem]{Remark}
%\newtheorem{assumption}{Assumption}[section]

% Packages and macros go here

\usepackage{amsfonts}
\usepackage{amsmath}

\usepackage{fancyhdr}
\usepackage{titlesec}
\usepackage{indentfirst}
\usepackage{booktabs}
\usepackage{verbatim}
\usepackage{color}
\usepackage{bm}
\usepackage{xcolor}

\usepackage{listings}

\usepackage{caption}

\usepackage{amssymb}
\usepackage{graphicx}
\usepackage{epstopdf}
\usepackage{algorithmic}
\ifpdf
  \DeclareGraphicsExtensions{.eps,.pdf,.png,.jpg}
\else
  \DeclareGraphicsExtensions{.eps}
\fi

% Declare title and authors, without \thanks
\newcommand{\TheTitle}{Fully discrete energy-dissipative and conservative discrete gradient particle methods for a class of continuity equations}

\newcommand{\TheAuthors}{J. Hu, S.Q. Van Fleet, A.T.S. Wan}

% Sets running headers as well as PDF title and authors
\headers{Dissipative and conservative discrete gradient particle methods}{\TheAuthors}

% Title. If the supplement option is on, then "Supplementary Material"
% is automatically inserted before the title.
\title{{\TheTitle}\thanks{%Submitted to the editors on {\today}.
All authors contributed equally. \funding{JH and SQVF 's research was supported in part by AFOSR grant FA9550-21-1-0358, NSF grant DMS-2409858, and DOE grant DE-SC0023164. SVF was also supported by the Pacific Institute for the Mathematical Sciences (PIMS). The research and findings may not reflect those of the Institute.}}}

% Authors: full names plus addresses.
\author{
      Jingwei Hu\thanks{Department of
  Applied Mathematics, University of Washington, Seattle, WA 98195, USA\\
    (\email{hujw@uw.edu}).}
    \and
    Samuel Q. Van Fleet\thanks{Department of
  Applied Mathematics, University of Washington, Seattle, WA 98195, USA\\
    (\email{svfleet@uw.edu}).}
   \and
    Andy T. S. Wan\thanks{Department of Applied Mathematics, University of California, Merced, Merced CA 95343, USA (\email{andywan@ucmerced.edu).}}
   % \footnotemark[3]
}

\usepackage{amsopn}

%%% Local Variables: 
%%% mode:latex
%%% TeX-master: "ex_article"
%%% End: 

\begin{document}

\maketitle

% REQUIRED
\begin{abstract}
Structure-preserving particle methods have recently been proposed for a class of nonlinear continuity equations, including aggregation-diffusion equation in [J. Carrillo, K. Craig, F. Patacchini, \emph{Calc. Var.}, 58 (2019), pp. 53] and the Landau equation in [J. Carrillo, J. Hu., L. Wang, J. Wu, \emph{J. Comput. Phys. X}, 7 (2020), pp. 100066]. One common feature to these equations is that they both admit some variational formulation, which upon proper regularization, leads to particle approximations dissipating the energy and conserving some quantities simultaneously at the semi-discrete level. In this paper, we formulate continuity equations with a density dependent bilinear form associated with the variational derivative of the energy functional and prove that appropriate particle methods satisfy a compatibility condition with its regularized energy. This enables us to utilize discrete gradient time integrators and show that the energy can be dissipated and the mass conserved simultaneously at the fully discrete level. In the case of the Landau equation, we prove that our approach also conserves the momentum and kinetic energy at the fully discrete level. Several numerical examples are presented to demonstrate the dissipative and conservative properties of our proposed method.
\end{abstract}

% REQUIRED
\begin{keywords}
continuity equation, aggregation-diffusion equation, Landau equation, particle method, structure-preserving, discrete gradient  
\end{keywords}

% REQUIRED
\begin{MSCcodes}
65M75, 37M15, 49Q22, 35Q84, 35Q20.
\end{MSCcodes}

\section{Introduction}
\label{sec:Intro}
We are interested in structure-preserving discretizations to a class of nonlinear continuity equations describing the evolution of an unknown density function $f(t,\bbx)$:
\begin{equation}
    \partial_t f(t,\bm x) + \nabla_{\bbx} \cdot\left(f(t,\bm x) \bm U[f](t,\bm x)\right)=0, \quad t>0, \ \bm x\in \mathbb{R}^d,\label{eq: NLContEqn}
\end{equation} where $\bm U[f]$ is a velocity field depending on $f$. Specifically, we will be employing a particle method in space and discrete gradient integrator in time to preserve simultaneously dissipative and conservative quantities for a class of \eqref{eq: NLContEqn}. 

As a first example, the aggregation-diffusion equation has the form \eqref{eq: NLContEqn} with 
\begin{equation}
\bm U[f](t,\bm x) = -\nabla_{\bbx} \frac{\delta E_A}{\delta f}(t,\bm x),
\end{equation}
where $\frac{\delta E_A}{\delta f}(t,\bm x)$ is the variational derivative of the functional
\begin{equation}\label{eq:energy functional}
E_A[f](t)=\int_{\mathbb{R}^d} H(f)+V f+\frac{1}{2}(W*f) f\rd\bm x.
\end{equation}Or more explicitly, 
\begin{equation} \label{eq:GFP}
\partial_tf(t,\bm x)= \nabla_{\bbx} \cdot [f(t,\bm x)\nabla_{\bbx}(H'(f)(t,\bm x)+V(\bm x)+(W*f)(t,\bm x))],
\end{equation} where $H(f)$ is the internal energy, $V(\bbx)$ is an external potential, and $W(\bbx)$ is a symmetric interaction potential. \eqref{eq:GFP} includes a large class of models such as the heat equation ($H(f)=f\log f$, $V=W=0$); porous medium equation ($H(f)=\frac{1}{m-1}f^m$, $m>1$, $V=W=0$); linear Fokker-Planck equation ($H(f)=f\log f$, $W=0$, and $V$ is some given function); Keller-Segel equation  ($H(f)=f\log f$, $V=0$, and $W$ is the Newtonian potential), and many others (see  \cite{CCY19} for more examples).

The second example we  consider is the Landau equation, which is a kinetic equation describing charged particle collisions \cite{Landau37, Villani02}. It is given by \eqref{eq: NLContEqn} with
\begin{equation}
    \bm U[f](t,\bm x) = -\int_{\mathbb{R}^d} A(\bm x-\bm y)\left(\nabla_{\bm x}\frac{\delta E_L}{\delta f}(t,\bm x) - \nabla_{\bm y} \frac{\delta E_L}{\delta f}(t,\bm y)\right)f(t,\bm y) \rd\bm y, \label{eq:LandauVelocity}
\end{equation}
where $\frac{\delta E_L}{\delta f}(t,\bm x)$ is the variational derivative of the functional
\begin{equation} \label{eq:energy functional1}
E_L[f](t) = \int_{\mathbb{R}^d} f \log f \rd\bm x,
\end{equation} and $A$ is a $d\times d$ symmetric positive semi-definite  matrix with its entries given by \\$a_{ij}(\bbx)=C|\bbx|^{\gamma}(|\bbx|^2\delta_{ij}-x_ix_j)$,
for $-d\leq \gamma \leq 1$. More explicitly, Landau equation is
\begin{equation}\label{eq:Landau1}
\partial_t f(t,\bm x)= \nabla_{\bbx} \cdot  \left[ \int_{\mathbb{R}^d} A(\bm x-\bm y)\left(\nabla_{\bm x} \log f(t,\bm x)-\nabla_{\bm y} \log f(t,\bm y)\right)f(t,\bm x) f(t,\bm y)\rd\bm y\right].
\end{equation} 

It is well-known that \eqref{eq:GFP} and \eqref{eq:Landau1} possess both dissipative and conservative  quantities. To see this directly and in anticipation with our structure-preserving discretization, we propose an alternative view of using a symmetric, negative semi-definite bilinear form. Specifically, if the velocity field $\bm U[f]$ of \eqref{eq: NLContEqn} can be identified with a bilinear form associated with the variational derivative of some functional, then \eqref{eq: NLContEqn} has a dissipative quantity and may possess some conserved quantities.
\begin{lemma}
    Given a solution $f(t,\bbx)$ of \eqref{eq: NLContEqn}, suppose that there exists a symmetric, negative semi-definite\footnote{Here, $B[f](\cdot,\cdot)$ is symmetric if $\ipB{\phi}{\psi} = \ipB{\psi}{\phi}$ and negative semi-definite if $\ipB{\phi}{\phi}\leq 0$ for all $\phi,\psi$.} bilinear form $B[f](\cdot,\cdot)$ satisfying
    \begin{equation}
    \ipB{\phi}{\frac{\delta E}{\delta f}}= \ip{\nabla_{\bbx} \phi}{f \bm U[f]} , ~~\phi \in C_c^\infty(\mathbb{R}^d), \label{eq: bilinearForm}
    \end{equation}
    where  $E[f](t)=\int_{\mathbb{R}^d} F(t,\bm x, f, g*f)\rd\bm x$ is a functional\footnote{In principle, one can also consider integrands depending on derivatives of $f$. For clarity and specificity to our examples, we have chosen here to focus on such cases only.} for some sufficiently smooth function $g$, and $\ip{\cdot}{\cdot}$ denotes the usual inner product in $L^2(\mathbb{R}^d)$. Then $\frac{\rd}{\rd t}E[f] \leq 0$.
    Moreover, if there exists some functional $I[f](t)=\int_{\mathbb{R}^d} G(t,\bm x, f, g*f) \rd\bm x$ such that $\ipB{\frac{\delta I}{\delta f}}{\frac{\delta E}{\delta f}}=0,$ then $\frac{\rd}{\rd t}I[f](t)=0$. \label{lem: bilinearForm}
\end{lemma}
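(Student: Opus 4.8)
The plan is to differentiate $E$ along the flow, trade the time derivative for a spatial one using \eqref{eq: NLContEqn}, and then recognize the resulting expression as the bilinear form evaluated on its diagonal. First I would establish the standard chain-rule identity $\frac{\rd}{\rd t}E[f] = \ip{\frac{\delta E}{\delta f}}{\partial_t f}$. For a functional of the form $\int_{\mathbb{R}^d} F(\bm x, f, g*f)\rd\bm x$ this is done by differentiating under the integral sign: the contribution of the pointwise argument gives $\partial_f F\cdot\partial_t f$, while the convolution argument gives $\partial_{(g*f)}F\cdot(g*\partial_t f)$. Applying Fubini together with the symmetry of $g$ moves the convolution onto the first factor, so both terms can be collected against $\partial_t f$, yielding the variational derivative $\frac{\delta E}{\delta f}=\partial_f F + g*\partial_{(g*f)}F$. (Here I treat the $t$-dependence of $F$ as entering only through $f$ and $g*f$, which is the situation in the two motivating examples; an explicit time dependence would simply add the harmless term $\int_{\mathbb{R}^d}\partial_t F\rd\bm x$.)

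Next I would substitute the equation in the form $\partial_t f=-\nabla_{\bbx}\cdot(f\bm U[f])$ and integrate by parts, discarding the boundary flux under suitable decay of $f$ and $\bm U[f]$ at infinity. This gives
\[
\frac{\rd}{\rd t}E[f] = -\int_{\mathbb{R}^d}\frac{\delta E}{\delta f}\,\nabla_{\bbx}\cdot(f\bm U[f])\rd\bm x = \ip{\nabla_{\bbx}\frac{\delta E}{\delta f}}{f\bm U[f]}.
\]
The right-hand side is precisely the quantity in \eqref{eq: bilinearForm} with the choice $\phi=\frac{\delta E}{\delta f}$, hence it equals $\ipB{\frac{\delta E}{\delta f}}{\frac{\delta E}{\delta f}}$. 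Negative semi-definiteness of $B[f]$ then forces this diagonal evaluation to be $\leq 0$, which establishes the dissipation claim.

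For the conservation statement I would run the identical computation with $I$ in place of $E$: the chain rule gives $\frac{\rd}{\rd t}I[f]=\ip{\frac{\delta I}{\delta f}}{\partial_t f}$, and after substituting the equation and integrating by parts this becomes $\ip{\nabla_{\bbx}\frac{\delta I}{\delta f}}{f\bm U[f]}$. Applying \eqref{eq: bilinearForm} with $\phi=\frac{\delta I}{\delta f}$ now converts it into the \emph{off}-diagonal pairing $\ipB{\frac{\delta I}{\delta f}}{\frac{\delta E}{\delta f}}$, which vanishes by hypothesis, giving $\frac{\rd}{\rd t}I[f]=0$.

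The main obstacle is the regularity and decay bookkeeping needed to make these formal manipulations rigorous. The identity \eqref{eq: bilinearForm} is postulated only for $\phi\in C_c^\infty(\mathbb{R}^d)$, whereas I apply it to $\frac{\delta E}{\delta f}$ and $\frac{\delta I}{\delta f}$, which are in general neither smooth nor compactly supported; I would close this gap either by assuming enough integrability that both sides of \eqref{eq: bilinearForm} extend continuously to the relevant function class, or by a density argument approximating the variational derivatives by $C_c^\infty$ functions and passing to the limit. The integration-by-parts step carries the companion requirement that $f\bm U[f]$ and the variational derivatives decay fast enough that no flux escapes at infinity; for the concrete models \eqref{eq:GFP} and \eqref{eq:Landau1} this is ensured by the physically relevant decay of $f$.
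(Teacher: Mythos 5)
Your proposal is correct and follows essentially the same route as the paper's proof: chain rule to get $\ip{\frac{\delta E}{\delta f}}{\partial_t f}$, substitution of the continuity equation with integration by parts to reach $\ip{\nabla_{\bbx} \frac{\delta E}{\delta f}}{f\bm U[f]}$, then \eqref{eq: bilinearForm} with $\phi = \frac{\delta E}{\delta f}$ (resp.\ $\phi = \frac{\delta I}{\delta f}$) together with negative semi-definiteness (resp.\ the orthogonality hypothesis). The only difference is that you flag the regularity and decay bookkeeping (applying \eqref{eq: bilinearForm} beyond $C_c^\infty$, discarding boundary flux), which the paper's formal proof passes over silently; this is a reasonable added caveat rather than a deviation in method.
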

\begin{proof}From \eqref{eq: NLContEqn} and negative semi-definiteness of $B[f]$,
    \begin{align*}
        \frac{\rd}{\rd t}E[f] &= \ip{\frac{\delta E}{\delta f}}{\partial_t f} =  \ip{\frac{\delta E}{\delta f}}{ -\nabla_{\bbx} \cdot\left(f \bm U[f]\right)}= \ip{\nabla_{\bbx} \frac{\delta E}{\delta f}}{f\bm U[f]} \\
        &= \ipB{\frac{\delta E}{\delta f}}{\frac{\delta E}{\delta f}} \leq 0.
    \end{align*} 
    Similarly, by hypothesis of $I$,
    \begin{align*}
        \frac{\rd}{\rd t}I[f] &= \ip{\frac{\delta I}{\delta f}}{\partial_t f} =
        \ip{\frac{\delta I}{\delta f}}{ -\nabla_{\bbx} \cdot\left(f \bm U[f]\right)}
        = \ip{\nabla_{\bbx} \frac{\delta I}{\delta f}}{f\bm U[f]}\\
        &=
        \ipB{\frac{\delta I}{\delta f}}{\frac{\delta E}{\delta f}} =0.
    \end{align*}
\end{proof}
We note that \eqref{eq: bilinearForm} is closely related to Hamiltonian PDEs where it would instead be a skew-symmetric bilinear form, satisfying the so-called Jacobi identity \cite{olver93}. We now apply \Cref{lem: bilinearForm} to show that \eqref{eq:GFP} and \eqref{eq:Landau1} possess both dissipative and conservative quantities.

For the aggregation-diffusion equation \eqref{eq:GFP}, the associated bilinear form is
\begin{equation}
\ipB{\phi}{\psi}=-\int_{\mathbb{R}^d}\nabla_{\bbx}\phi(\bbx) \cdot \nabla_{\bbx}\psi(\bbx)f(t,\bbx)\rd{\bbx},
\end{equation}
which can be readily verified to be symmetric and negative semi-definite. Taking $I_A[f]=\int_{\mathbb{R}^d}f\rd{\bbx}$ and $E_A[f]$ as in \eqref{eq:energy functional}, so $\frac{\delta I_A}{\delta f}=1$, then
$B[f](\frac{\delta I_A}{\delta f},\frac{\delta E_A}{\delta f})=0$. By \Cref{lem: bilinearForm}, one thus has the energy dissipation and mass conservation:
$$
\frac{\rd E_A}{\rd t}\leq0, \quad \frac{\rd I_A}{\rd t}=0.
$$

For the Landau equation \eqref{eq:Landau1}, the associated bilinear form is
\begin{equation}
   \ipB{\phi}{\psi} =- \int_{\mathbb{R}^d\times\mathbb{R}^d} (\nabla_{\bbx} \phi (\bbx))^T A(\bm x-\bm y)\left(\nabla_{\bm x}\psi(\bbx) -\nabla_{\bm y}\psi(\bby)\right) f(t,\bm x) f(t,\bm y) \rd\bm x \rd\bm y.
\end{equation} 
Using $A(\bbx)=A(-\bbx)$, one can further show that
\begin{align*}
   \ipB{\phi}{\psi} =&- \frac{1}{2}\int_{\mathbb{R}^d\times\mathbb{R}^d} (\nabla_{\bbx} \phi (\bbx)-\nabla_{\bby} \phi (\bby))^T A(\bm x-\bm y) \\
   &\qquad\times \left(\nabla_{\bm x}\psi(\bbx) -\nabla_{\bm y}\psi(\bby)\right) f(t,\bm x) f(t,\bm y) \rd\bm x \rd\bm y,
\end{align*} 
from which it follows immediately that $B[f]$ is symmetric and negative semi-definite (since $A=A^T$ and is positive semi-definite). Taking $\bm I_L[f]=\int_{\mathbb{R}^d}  (1,\bbx,|\bbx|^2/2)^Tf\rd{\bbx}$ and $E_L[f]$ as in \eqref{eq:energy functional1}, so $\frac{\delta \bm I_L}{\delta f}=(1,\bbx,|\bbx|^2/2)^T$, then $B[f](\frac{\delta \bm I_L}{\delta f},\frac{\delta E_L}{\delta f})=\bm 0$ (the last one is due to $A(\bbx)\bbx=\bm 0$). Thus by \Cref{lem: bilinearForm}, one obtains the energy dissipation\footnote{In the kinetic literature, this is often called the entropy dissipation since \eqref{eq:energy functional1} is the Boltzmann entropy \cite{Villani02}.} and simultaneously the conservation of mass, momentum, and kinetic energy:
\begin{equation*}
\frac{\rd E_L}{\rd t}\leq0, \quad \frac{\rd \bm I_L}{\rd t}=0.
\end{equation*}

To preserve the above dissipative and conservative properties of \eqref{eq:GFP} and \eqref{eq:Landau1}, we propose using the regularized particle approach of \cite{CCP19} and \cite{CHWW20}. As we will discuss in \Cref{sec:Particle method}, this will lead to a system of ODEs which are discretized version of the bilinear form \eqref{eq: bilinearForm}. Thanks to a \emph{compatibility condition} satisfied by such particle method and its regularized energy functional, these ODEs inherit the dissipative and conservative properties of the continuity equations. From this, we can then utilize \emph{discrete gradient integrators}\footnote{A discrete gradient approximation $\overline{\nabla E}:\mathbb{R}^k\rightarrow \mathbb{R}^k$ of $\nabla E$ is a vector field satisfying,
\begin{align}
    \overline{\nabla E}(\bm Y, \bm X)^T(\bm Y-\bm X) &= E(\bm Y)-E(\bm X), ~~\bm X, \bm Y\in\mathbb{R}^k. \label{eq: discGradProp}%\\
    %\overline{\nabla E}(\bm x, \bm x) &= \nabla E(\bm x).
\end{align} Examples include the Itoh-Abe discrete gradient \cite{IA88}, midpoint discrete gradient \cite{G96} and mean-value \cite{HLV83} or average vector field discrete gradient \cite{QM08}.} and prove that the resulting fully discrete schemes for \eqref{eq:GFP} and \eqref{eq:Landau1} satisfy respectively 
\begin{align*}
    E_A(\bm X^{n+1}) \leq E_A(\bm X^{n}),& \quad I_A(\bm X^{n+1}) = I_A(\bm X^{n}),\\
    E_L(\bm X^{n+1}) \leq E_L(\bm X^{n}),& \quad \bm I_L(\bm X^{n+1}) = \bm I_L(\bm X^{n}),
\end{align*} 
where $\bm X^n$ is the particle solution at time step $t_n$ to be detailed in \Cref{sec:ODEform}.

We note that structure-preserving discretizations have been proposed for constant skew-symmetric or negative semi-definite operators on Hilbert spaces in \cite{CelleAE12}, where discrete gradient integrators were applied to a wide variety of PDEs with either conservative or dissipative quantities independently. In contrast, we consider continuity equations possessing both dissipative and conservative quantities \emph{simultaneously}. In the geometric numerical integration literature, this scenario is rather exceptional and has so far not been well studied using discrete gradient integrators\footnote{In general, even preserving $m$ conserved quantities simultaneously entails working with $(m+1)$-th order skew-symmetric tensor with discrete gradient integrators \cite{MQR99}.} to the best knowledge of the authors. Moreover, we consider the associated symmetric bilinear form which is \emph{nonconstant} (i.e., it can depend on the solution) and it can be \emph{degenerate}\footnote{A degenerate bilinear form means there exists a non-zero $u$ such that $\ipB{u}{v}=0$ for all $v$, which is in contrast to inner product being nondegenerate. For instance, the bilinear form associated with the Landau equation is degenerate since $\ipB{|\bm x|^2}{\psi}=0$ for all $\psi$ due to $A(\bm x)\bm x=0$.} and does not necessarily need to be an inner product. Nevertheless, with appropriate particle discretization, we will show that the discrete gradient integrators do in fact preserve both dissipative and conservative quantities for the aggregation-diffusion equation \eqref{eq:GFP} and Landau equation \eqref{eq:Landau1}.

On the other hand in the plasma physics literature, we note that similar particle discretizations based on discrete gradient integrators have also recently been proposed for the Landau equation. Specifically, \cite{Hirvijoki21} extended a particle scheme of \cite{CHWW20} using discrete gradient integrator for the Landau equation. While our proposed particle scheme with mean-value discrete gradient integrator is similar to that of \cite{Hirvijoki21}, our approach based on a symmetric negative-semidefinite bilinear form enables us to work with a broader class of PDEs with dissipative and conservative quantities. Moreover, we provide more extensive numerical studies on the Landau equation using discrete gradient integrators, while \cite{Hirvijoki21} only showed numerical results involving a midpoint velocity integrator. Also, \cite{JKHP24} proposed a structure-preserving particle approach for the Landau equation by exploiting the so-called metriplectic structure, with spline and finite element approximation in space and midpoint discrete gradient integrator in time. In contrast, we employ Gaussian basis approximations which avoid the need to invert a mass matrix appearing in the formulation of \cite{JKHP24}. Moreover, we showcase extensive numerical studies (absent in \cite{JKHP24}) and utilize quadrature approximation of the mean-value discrete gradient integrator to ensure robustness and avoid large round-off errors with small divisors that can occur with the midpoint discrete gradient.

This paper is organized as follows. In \Cref{sec:Particle method}, we review specific particle discretizations for the aggregation-diffusion equation \eqref{eq:GFP} and Landau equation \eqref{eq:Landau1}, as well as proving an important compatibility condition which enables the use of discrete gradient integrators. We then show in \Cref{sec:ODEform} how discrete gradient integrators can preserve for a class of ODEs with 
discretized dissipative and conservative quantities arising from the particle methods. Finally in \Cref{sec:Examples}, we present various numerical results to demonstrate that our fully discrete scheme preserves the relevant dissipative and conservative quantities simultaneously.

%%%%%%%%%%%%%%%%%%%%%%%%%%%%%%%%%%%%%%%
\section{Particle method}
\label{sec:Particle method}

In the classical deterministic particle method \cite{Chertock}, one seeks a weak solution of \eqref{eq: NLContEqn} as follows:
\begin{equation} \label{fparticle}
f(t,\bbx) \approx f^N(t,\bbx) := \sum_{p=1}^N w_p \delta (\bbx -\bbx_p(t)),
\end{equation}
where $\{w_p,\bbx_p(t)\}_{p=1}^N$ are the particle weights (which are positive and constant) and positions satisfying
\begin{equation} \label{eq:particleMethod}
\frac{\rd \bbx_p}{\rd{t}}=\bm U[f^N](t,\bbx_p), \quad \bbx_p(0)=\bbx_i,\quad w_p=h^d f^0(\bbx_i).
\end{equation}
Here we choose a large enough computational domain $[-L,L]^d$ (such that the solution is negligible close to the boundary) and partition the domain into $N=M^d$ uniform cells with cell size $h=2L/M$, and choose initial particle positions as the cell center $\{\bbx_i\}$ and particle weights as the cell mass approximated by the midpoint quadrature. 

In view of the bilinear form in \eqref{eq: bilinearForm}, we choose the velocity field $\bm U[f^N]$ to hold \eqref{eq: bilinearForm} at the particle level, by regularizing the functional $E[f]$ so that
\begin{equation} \label{eq: bilinearReg} \mathcal{B}[f^N]\left(\phi,\frac{\delta E^{\varepsilon}}{\delta f}[f^N]\right)= \ip{\nabla_{\bbx} \phi}{f^N \bm U[f^N]} , ~~\phi \in C_c^\infty(\mathbb{R}^d). 
\end{equation}
For the aggregation-diffusion equation \eqref{eq:GFP}, this leads to
\begin{equation} \label{schemeA}
\bm U[f^N](t,\bbx_p)\approx-\nabla_{\bbx} \frac{\delta E_A^{\varepsilon}}{\delta f}[f^N](\bbx_p),
\end{equation}
and for the Landau equation \eqref{eq:Landau1}, this leads to
\begin{equation} \label{schemeL}
\bm U[f^N](t,\bbx_p)\approx -\sum_{q=1}^N w_q A(\bbx_p-\bbx_q) \left(\nabla_{\bbx} \frac{\delta E_L^{\varepsilon}}{\delta f}[f^N](\bbx_p)-\nabla_{\bbx} \frac{\delta E_L^{\varepsilon}}{\delta f}[f^N](\bbx_q)\right).
\end{equation}
Following \cite{CCP19} and \cite{CHWW20}, the regularized energy $E_A^{\varepsilon}$ and $E_L^{\varepsilon}$  can be chosen as\footnote{There are alternative ways for regularization; such as choosing $H^{\varepsilon}(f)= f\log (f*\varphi_{\varepsilon})$ and $H^{\varepsilon}(f)= \frac{1}{m-1}f(f*\varphi_{\varepsilon})^{m-1}$, respectively. If the potential $W$ is singular, one might also need to regularize $W$ such that it becomes $W*\varphi_{\varepsilon}$ \cite{CB15}. Our proposed method also works for these regularizations. For concreteness, we use the regularization presented in the main text for numerical demonstration.}
\begin{equation} \label{Hreg}
E_A^{\varepsilon}[f]=\int_{\mathbb{R}^d }H^{\varepsilon}(f)+Vf+\frac{1}{2}(W*f)f\rd{\bbx}, \quad E_L^{\varepsilon}[f]=\int_{\mathbb{R}^d } H^{\varepsilon}(f)\rd{\bbx},
\end{equation}
where in the case of $H(f)=f \log f$, $H^{\varepsilon}(f):= (f*\varphi_{\varepsilon})\log (f*\varphi_{\varepsilon})$; and in the case of $H=\frac{1}{m-1}f^m$, $H^{\varepsilon}(f):= \frac{1}{m-1}(f*\varphi_{\varepsilon})^m$. Here $\varphi_{\varepsilon}(\bbx)$ is the mollifier function with a parameter $\varepsilon$, often chosen as the Gaussian function $\varphi_{\varepsilon}(\bbx)=(2\pi \varepsilon)^{-d/2}\exp\left(-\frac{|\bbx|^2}{2\varepsilon}\right)$. Computing $\nabla_{\bbx}\frac{\delta E_A^{\varepsilon}}{\delta f}$ and $\nabla_{\bbx}\frac{\delta E_L^{\varepsilon}}{\delta f}$, and evaluating them at $f^N$ yields the explicit expressions used in \eqref{schemeA} and \eqref{schemeL}. Specifically, 
\begin{align} 
\nabla_{\bbx} \frac{\delta E_A^{\varepsilon}}{\delta f}[f^N](\bbx_p)&=h^{\varepsilon}(\bbx_p)+\nabla_{\bbx} V(\bbx_p)+\sum_{q=1}^N w_q\nabla_{\bbx} W(\bbx_p-\bbx_q), \label{schemeA1}\\
\nabla_{\bbx} \frac{\delta E_L^{\varepsilon}}{\delta f}[f^N](\bbx_p)&=h^{\varepsilon}(\bbx_p),\label{schemeL1}
\end{align}
where for $H^{\varepsilon}(f)= (f*\varphi_{\varepsilon})\log (f*\varphi_{\varepsilon})$, 
\begin{align}
h^{\varepsilon}(\bbx_p):=\int_{\mathbb{R}^d} \nabla_{\bbx} \varphi_{\varepsilon}(\bbx_p-\bbx)\log \left(\sum_q w_q \varphi_{\varepsilon}(\bbx-\bbx_q)\right)\rd{\bbx},\label{eq:heps1}
\end{align}
and for $H^{\varepsilon}(f)= \frac{1}{m-1}(f*\varphi_{\varepsilon})^m$,
\begin{equation}
h^{\varepsilon}(\bbx_p):=\frac{m}{m-1}\int_{\mathbb{R}^d} \nabla_{\bbx} \varphi_{\varepsilon}(\bbx_p-\bbx)\left(\sum_q w_q \varphi_{\varepsilon}(\bbx-\bbx_q)\right)^{m-1}\rd{\bbx}.\label{eq:heps2}
\end{equation}
\subsection{Compatibility condition of particle method for a class of energy functional}
Motivated by the aforementioned cases, we introduce a general energy functional of the form
\begin{equation} \label{eq:regularE}
E[f](t) = \int_{\mathbb{R}^d} F(t,\bm x, (f*u)(t,\bm x)) + f(t,\bm x)G(t,\bm x,(f*v)(t,\bm x)) \rd \bm x,
\end{equation}
where $u,v$ are some fixed symmetric\footnote{Here, we say that $g$ is symmetric if $g(-\bm x)=g(\bm x)$.} functions. We show in the following that energy functional $E[f]$ of such form satisfies a special \emph{compatibility} condition, which will enable us to utilize discrete gradient integrators to preserve the relevant dissipative and conservative quantities. Specifically, we have
\begin{lemma}
    Let $u,v$ be fixed symmetric integrable functions on $\mathbb{R}^d$ with integrable $\nabla u, \nabla v$. Also, let $F(t,\bm x, y)$, $G(t,\bm x, y)$, $F_y(t,\bm x, y)$, $G_y(t,\bm x, y)$ be integrable functions on $(0,\infty)\times\mathbb{R}^d\times \mathbb{R}$. The energy functional defined in \eqref{eq:regularE} satisfies 
    \begin{equation}
        \frac{1}{w_p}\nabla_{\bm x_p} E[f^N](\bm x_1,\dots,\bm x_N) = \nabla_{\bm x}\frac{\delta E}{\delta f}[f^N](\bm x_p),
    \end{equation} \label{lem:compCond}
 where $f^N$ is the particle approximation given in \eqref{fparticle}.
\end{lemma}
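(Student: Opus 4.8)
The plan is to compute both sides of the claimed identity independently and match them term by term, with the symmetry of $u$ and $v$ doing the essential work. First I would compute the variational derivative $\frac{\delta E}{\delta f}$ for a general $f$ by taking the first variation $\frac{\rd}{\rd\eps}\big|_{\eps=0}E[f+\eps\eta]$ against test functions $\eta$. Differentiating through the three pieces of \eqref{eq:regularE} produces: from $F(t,\bm x,(f*u))$ a contribution $F_y(t,\cdot,f*u)*u$; and from $f\,G(t,\bm x,(f*v))$ both an undifferentiated term $G(t,\bm x,(f*v))$ and a term $\left(f\,G_y(t,\cdot,f*v)\right)*v$ arising from the $(f*v)$-dependence. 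The key algebraic step here is moving the convolution off $\eta$ and onto the accompanying factor, which is legitimate precisely because $u,v$ are symmetric, so that $u(\bm x-\bm y)=u(\bm y-\bm x)$ and likewise for $v$. This yields a closed form for $\frac{\delta E}{\delta f}[f](\bm x)$, which I then evaluate at $f=f^N$; the distributional factor $f^N$ in the last term collapses its convolution into the finite sum $\sum_q w_q\,G_y(t,\bm x_q,(f^N*v)(\bm x_q))\,v(\bm x-\bm x_q)$. Applying $\nabla_{\bm x}$ and setting $\bm x=\bm x_p$ gives the right-hand side of the claim.

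Next I would compute the left-hand side directly. Substituting $f^N=\sum_p w_p\delta(\cdot-\bm x_p)$ into \eqref{eq:regularE} turns $E[f^N]$ into an ordinary function of the positions $\bm x_1,\dots,\bm x_N$: the first integral becomes $\int F(t,\bm x,\sum_q w_q u(\bm x-\bm x_q))\rd\bm x$, while the second collapses to the double sum $\sum_q w_q\,G(t,\bm x_q,\sum_{q'}w_{q'}v(\bm x_q-\bm x_{q'}))$. I would then apply $\nabla_{\bm x_p}$ by the chain rule. The first integral is straightforward: using $\nabla_{\bm x_p}u(\bm x-\bm x_p)=-(\nabla u)(\bm x-\bm x_p)$ and then the oddness of $\nabla u$ (a consequence of symmetry of $u$) to flip the sign, the result after dividing by $w_p$ matches the $F$-contribution already obtained on the right.

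I expect the main obstacle to be the $G$-term, because $\bm x_p$ enters $\sum_q w_q\,G(t,\bm x_q,(f^N*v)(\bm x_q))$ in three distinct ways: as the explicit position slot $\bm x_q$ when $q=p$, as the base point of the inner convolution $(f^N*v)(\bm x_p)=\sum_{q'}w_{q'}v(\bm x_p-\bm x_{q'})$, and through the summation index $q'=p$ inside every $(f^N*v)(\bm x_q)$. Carefully separating the diagonal ($q=p$) contributions, which assemble into $w_p\left[\nabla_{\bm x}G+G_y\,\nabla_{\bm x}(f^N*v)\right](t,\bm x_p,(f^N*v)(\bm x_p))$ where $\nabla_{\bm x}G$ denotes the gradient in the explicit middle slot, from the off-diagonal self-consistency contribution $-w_p\sum_q w_q\,G_y(t,\bm x_q,(f^N*v)(\bm x_q))(\nabla v)(\bm x_q-\bm x_p)$, is the crux of the argument. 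Dividing by $w_p$ and invoking oddness of $\nabla v$ to flip its sign, the off-diagonal piece becomes exactly the gradient of the distributional term $\left(f^N G_y*v\right)$ from the right-hand side, while the diagonal piece matches $\nabla_{\bm x}\!\left[G(t,\bm x,(f^N*v))\right]$ evaluated at $\bm x_p$. Matching the three groups completes the proof. Throughout, the stated integrability hypotheses on $u,v,\nabla u,\nabla v$ and on $F,G,F_y,G_y$ justify differentiating under the integral sign and interchanging the order of integration.
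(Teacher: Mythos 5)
Your proposal is correct and follows essentially the same route as the paper's proof: both sides are computed independently, with the first variation yielding $\frac{\delta E}{\delta f}=F_y(t,\cdot,f*u)*u+\left(fG_y(t,\cdot,f*v)\right)*v+G(t,\cdot,f*v)$, the substitution $f=f^N$ collapsing the $G$-term to a double sum, and the same diagonal/off-diagonal bookkeeping of the three ways $\bm x_p$ enters that sum. The only point where your writeup is looser than the paper's is the final matching of the diagonal piece, where the right-hand side contains the $r=p$ term $\nabla v(\bm 0)$ that is absent on the left, and the paper explicitly invokes $\nabla v(\bm 0)=\bm 0$ (by symmetry of $v$); your blanket appeal to oddness of $\nabla v$ does cover this.
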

\begin{proof}
For brevity and clarity, we suppress writing dependence on $t$ in $\bm x_q=\bm x_q(t)$. As $(f^N*u)(t,\bm x)=\displaystyle\sum_{q=1}^N w_q u(\bm x-\bm x_q)$ and similarly for $f^N*v$, evaluating $E[f^N]$ yields
    \begin{align*}
        E[f^N] &= \int_{\mathbb{R}^d} F\left(t,\bm x, \sum_{q=1}^N w_q u(\bm x-\bm x_q)\right) \rd\bm x+\sum_{q=1}^N w_q G\left(t,\bm x_q, \sum_{r=1}^N w_r v(\bm x_q-\bm x_r)\right).
    \end{align*} Taking the gradient with respect to $\bm x_p$ gives
        \begin{align}
        \label{eq:LHS-compCond}&\nabla_{\bm x_p} E[f^N](\bm x_1,\dots,\bm x_N)\\
        &=\int_{\mathbb{R}^d} \nabla_{\bm x_p} F\left(t,\bm x, \sum_{q=1}^N w_q u(\bm x-\bm x_q)\right)\rd\bm x \nonumber\\
        &\hskip 2mm+ \nabla_{\bm x_p}\left[w_p G\left(t,\bm x_p,\sum_{r=1}^N w_r v(\bm x_p-\bm x_r)\right)+\sum_{\substack{q=1\\ q\neq p}}^N w_q G\left(t,\bm x_q,\sum_{r=1}^N w_r v(\bm x_q-\bm x_r)\right)\right] \nonumber\\
        &=w_p\int_{\mathbb{R}^d} F_y\left(t,\bm x, \sum_{q=1}^N w_q u(\bm x-\bm x_q)\right) \nabla_{\bbx} u(\bm x_p-\bm x)\rd\bm x\nonumber\\
        &\hskip 2mm+ w_p\left[\nabla_{\bbx}G\left(t,\bm x_p,\sum_{r=1}^N w_r v(\bm x_p-\bm x_r)\right) +G_y\left(t,\bm x_p,\sum_{r=1}^N w_r v(\bm x_p-\bm x_r)\right) \right.\nonumber\\
        &\hskip 6mm \left.\times\sum_{\substack{r=1\\r\neq p}}^N w_r \nabla_{\bbx} v(\bm x_p-\bm x_r) +\sum_{\substack{q=1\\q\neq p}}^N w_qG_y\left(t,\bm x_q,\sum_{r=1}^N w_r v(\bm x_q-\bm x_r)\right) \nabla_{\bbx} v(\bm x_p-\bm x_q)\right].
        \nonumber
    \end{align}
    On the other hand, evaluating the variational derivative of $E[f]$ gives  \begin{align*}
       &\left.\frac{\rd}{\rd\alpha} E[f+\alpha \eta]\right|_{\alpha =0}\\
        &= \int_{\mathbb{R}^d} F_y\left(t, \bm x, (f*u)(t,\bm x)\right)(\eta*u)(\bm x) \rd\bm x\\
        &\hskip 0mm+\int_{\mathbb{R}^d} f(t,\bm x)G_y\left(t, \bm x, (f*v)(t,\bm x)\right)(\eta*v)(\bm x) \rd\bm x + \int_{\mathbb{R}^d} \eta(\bm x)G\left(t,\bm x,(f*v)(t,\bm x)\right) \rd\bm x\\
        &= \int_{\mathbb{R}^d} \eta(\bm y)\dfrac{\delta E}{\delta f}(t,\bm y) \rd\bm y,
    \end{align*}
    where $\displaystyle \frac{\delta E}{\delta f}=F_y(t,\cdot,f*u)*u+\left(fG_y(t,\cdot,f*v)\right)*v+G(t,\cdot,f*v)$.
    Computing the gradient with respect to $\bm x$ of the variational derivative of $E$ yields\begin{align*}
        \nabla_{\bm x}\frac{\delta E}{\delta f}[f](\bm x) &= \int_{\mathbb{R}^d} F_y\left(t, \bm y, (f*u)(t,\bm y)\right)\nabla_{\bbx}u(\bm x-\bm y) \rd\bm y \\
        &\hskip 4mm+ \int_{\mathbb{R}^d} f(t,\bm y)G_y(t,\bm y,(f*v)(t,\bm y))\nabla_{\bbx} v(\bm x-\bm y)\rd\bm y\\
        &\hskip 4mm+ \nabla_{\bbx} G(t,\bm x,(f*v)(\bm x))+G_y(t,\bm x,(f*v)(\bm x))(f*\nabla_{\bbx} v)(\bm x).
    \end{align*}Thus, evaluating $\nabla_{\bm x}\dfrac{\delta E}{\delta f}[f^N]$ at $\bm x=\bm x_p$ gives
    \begin{align}
        \label{eq:RHS-compCond}
        \nabla_{\bm x}\frac{\delta E}{\delta f}[f^N](\bm x_p) &= \int_{\mathbb{R}^d} F_y\left(t, \bm y, \sum_{q=1}^N w_q u(\bm y-\bm x_q)\right)\nabla_{\bbx}u(\bm x_p-\bm y) \rd\bm y\\
        &\hskip 4mm+ \sum_{q=1}^N w_q G_y\left(t,\bm x_q,\sum_{r=1}^N w_r v(\bm x_q-\bm x_r)\right)\nabla_{\bbx} v(\bm x_p-\bm x_q) \nonumber\\
        &\hskip 4mm+\nabla_{\bbx}G\left(t,\bm x_p,\sum_{q=1}^N w_q v(\bm x_p-\bm x_q)\right)\nonumber\\
        &\hskip 4mm+ G_y\left(t,\bm x_p,\sum_{q=1}^N w_q v(\bm x_p-\bm x_q)\right)\left(\sum_{r=1}^N w_r \nabla_{\bbx} v(\bm x_p-\bm x_r)\right).\nonumber
    \end{align}
 Comparing both sides of \eqref{eq:LHS-compCond} and \eqref{eq:RHS-compCond} and using  $\nabla_{\bbx} v(\bm 0) = \bm 0$ (by symmetry of $v$) shows the desired result.
\end{proof}

We note \Cref{lem:compCond} covers the regularized energies\footnote{\Cref{lem:compCond} can be applied to other regularized energy mentioned earlier. For example, if instead $E_L^\varepsilon[f]=\int_{\mathbb{R}^d} f\log (f*\varphi_\varepsilon) \rd\bm x$ is used, then the associated integrand would be $G(t,\bm x,u)=\log u$.} defined in \eqref{Hreg}. Specifically, one can express the energy functional of the aggregation-diffusion equation as
\begin{align}
    E_A^\varepsilon[f]=\int_{\mathbb{R}^d} F(t,\bm x,f*\varphi_\varepsilon)+f(t,\bm x)G(t,\bm x,f*W)\rd\bm x, \label{eq:EA alt}
\end{align} with $F(t,\bm x,y)=y\log y$ or $\frac{1}{m-1}y^m$, $G(t,\bm x,y)=V(\bm x) + \frac{1}{2}y$. Similarly, the energy functional of the Landau equation can be written as
\begin{align}
    &E_L^\varepsilon[f]=\int_{\mathbb{R}^d} F(t,\bm x,f*\varphi_\varepsilon)\rd\bm x, \label{eq:EL alt}
\end{align} with $F(t,\bm x,y)=y\log y$.
Thus, applying \Cref{lem:compCond} to \eqref{eq:EA alt} and \eqref{eq:EL alt} gives the following identities.
\begin{corollary}  \label{cor}
Define the discretized energies
\begin{equation}
E_A^{\varepsilon}(\bbx_1,\dots,\bbx_N)=E_A^{\varepsilon}[f^N], \quad E_L^{\varepsilon}(\bbx_1,\dots,\bbx_N)=E_L^{\varepsilon}[f^N],
\end{equation}
where $E_A^{\varepsilon}[f]$ and $E_L^{\varepsilon}[f]$ are defined in (\ref{Hreg}). Then one has
\begin{align} 
&\frac{1}{w_p}\nabla_{\bbx_p}E_A^{\varepsilon}(\bbx_1,\dots,\bbx_N)=\nabla_{\bbx} \frac{\delta E_A^{\varepsilon}}{\delta f}[f^N](\bbx_p), \\
&\frac{1}{w_p}\nabla_{\bbx_p}E_L^{\varepsilon}(\bbx_1,\dots,\bbx_N)=\nabla_{\bbx} \frac{\delta E_L^{\varepsilon}}{\delta f}[f^N](\bbx_p),
\end{align}
where the right hand sides are given by \eqref{schemeA1} and \eqref{schemeL1}.
\end{corollary}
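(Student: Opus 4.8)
The plan is to recognize the Corollary as a direct specialization of \Cref{lem:compCond}, so the real content is twofold: first, matching $E_A^{\varepsilon}$ and $E_L^{\varepsilon}$ to the general template \eqref{eq:regularE} and checking that its hypotheses hold; second, computing the resulting variational-derivative side explicitly and confirming it reproduces \eqref{schemeA1} and \eqref{schemeL1}. Since \Cref{lem:compCond} already establishes the identity $\frac{1}{w_p}\nabla_{\bm x_p} E[f^N] = \nabla_{\bm x}\frac{\delta E}{\delta f}[f^N](\bm x_p)$ for every functional of the form \eqref{eq:regularE}, I do not need to redo the gradient-versus-variational-derivative comparison; I only need to invoke it with the correct data and then unwind the notation.

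First I would read off the identifications already indicated in \eqref{eq:EA alt} and \eqref{eq:EL alt}. For $E_A^{\varepsilon}$ I take $u=\varphi_{\varepsilon}$, $v=W$, $F(t,\bm x,y)=y\log y$ (or $\frac{1}{m-1}y^m$), and $G(t,\bm x,y)=V(\bm x)+\tfrac12 y$, so that $f\,G(t,\bm x,f*W)=Vf+\tfrac12(W*f)f$ recovers \eqref{eq:energy functional}; for $E_L^{\varepsilon}$ I take $u=\varphi_{\varepsilon}$, $F(t,\bm x,y)=y\log y$, and no $G$ term. I would then verify the standing hypotheses of \Cref{lem:compCond}: the Gaussian mollifier $\varphi_{\varepsilon}$ is symmetric and integrable with integrable gradient, $W$ is symmetric by assumption, and the chosen $F,G$ together with $F_y,G_y$ are integrable under the regularity already presumed on $V$ and $W$. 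The symmetry of $v=W$ is what supplies $\nabla W(\bm 0)=\bm 0$, the condition used at the very end of the lemma's proof.

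Having checked the hypotheses, I would apply \Cref{lem:compCond} to obtain the two claimed identities immediately, and then verify that the right-hand sides coincide with \eqref{schemeA1} and \eqref{schemeL1} by evaluating the explicit formula $\frac{\delta E}{\delta f}=F_y(t,\cdot,f*u)*u+(fG_y(t,\cdot,f*v))*v+G(t,\cdot,f*v)$. For the aggregation-diffusion case with $F(f)=f\log f$ one has $F_y=\log(f*\varphi_{\varepsilon})+1$; the constant $1$ convolves with $\varphi_{\varepsilon}$ to a constant whose gradient vanishes, leaving exactly the $h^{\varepsilon}(\bm x_p)$ of \eqref{eq:heps1}, while $G_y=\tfrac12$ combines with the $G$ term to give $\nabla_{\bm x} V(\bm x_p)+\sum_q w_q\nabla_{\bm x}W(\bm x_p-\bm x_q)$, matching \eqref{schemeA1}. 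The porous-medium case is analogous, yielding \eqref{eq:heps2}. The Landau case is the same computation with only the $h^{\varepsilon}$ term surviving, giving \eqref{schemeL1}. The main (though minor) obstacle I anticipate is bookkeeping around the additive constant in $F_y$ and ensuring that the evaluation $f=f^N$ is interchanged correctly with the gradient and convolutions under the stated integrability; once that is handled, the identities follow directly.
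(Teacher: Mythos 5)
Your proposal is correct and takes essentially the same route as the paper, which likewise obtains the corollary by applying \Cref{lem:compCond} to the rewritings \eqref{eq:EA alt} and \eqref{eq:EL alt} with $u=\varphi_{\varepsilon}$, $v=W$, and the stated $F$, $G$. Your explicit unwinding of $\frac{\delta E}{\delta f}=F_y(t,\cdot,f*u)*u+\left(fG_y(t,\cdot,f*v)\right)*v+G(t,\cdot,f*v)$ to recover \eqref{schemeA1} and \eqref{schemeL1}, including the constant from $F_y=\log y+1$ being annihilated by the gradient, matches the computations the paper carries out just before stating the corollary.
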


By \Cref{cor}, the particle method \eqref{eq:particleMethod} with velocity \eqref{schemeA} for the aggregation-diffusion equation \eqref{eq:GFP} can be written as
\begin{equation} \label{eq:schemeA2}
\frac{\rd \bbx_p}{\rd{t}}=-\frac{1}{w_p}\nabla_{\bbx_p}E_A^{\varepsilon}(\bbx_1,\dots,\bbx_N),
\end{equation}
and the particle method \eqref{eq:particleMethod} with velocity \eqref{schemeL} for the Landau equation \eqref{eq:Landau1} can be written as
\begin{equation} \label{eq:schemeL2}
\frac{\rd \bbx_p}{\rd{t}}=-\sum_{q=1}^N w_q A(\bbx_p-\bbx_q) \left(\frac{\nabla_{\bbx_p}E_L^{\varepsilon}(\bbx_1,\dots,\bbx_N)}{w_p}- \frac{\nabla_{\bbx_q}E_L^{\varepsilon}(\bbx_1,\dots,\bbx_N)}{w_q}\right).
\end{equation}

\section{Discrete gradient integrators for ODEs with dissipative and conservative quantities}
\label{sec:ODEform}

Motivated by the schemes \eqref{eq:schemeA2} and \eqref{eq:schemeL2}, we now consider a class of ODE system that our particle method belongs to, with the specific form
\begin{equation}
    \frac{\rd\bm X}{\rd t} = B(\bm  X)\nabla E(\bm X), \label{eq: dispODE}
\end{equation} where $\bm X(\cdot)\in \mathbb{R}^{k}$, $B(\cdot)$ is a $k\times k$, symmetric, negative semi-definite matrix and $E$ is a $C^1$ scalar function. We note that for our particle method, $\bm X$ will be a vector concatenated with all the particle positions, i.e., $\bm X^T=(\bm x_1^T,\dots,\bm x_N^T) \in \mathbb{R}^{dN}$ and the gradient operator $\nabla$ is understood as a concatenation of all the gradients with respect to each particle $\nabla^T = (\nabla_{\bm x_1}^T, \cdots, \nabla_{\bm x_N}^T)$. Moreover, the matrix $B(\bm X)$ of \eqref{eq: dispODE} can be viewed as a discretization of the bilinear form $\mathcal{B}[f]$ of \eqref{eq: bilinearForm} and the following lemma is the semi-discrete analog of \Cref{lem: bilinearForm} for the dissipative and conservative quantities.

\begin{lemma} \label{lem: dispConsQuan} $E(\bm X(t))$ is a nonincreasing function in $t$ along the solutions of \eqref{eq: dispODE}. In addition, if $I(\bm X)$ is a $C^1$ scalar function satisfying $\nabla I(\bm X) \perp B(\bm X)\nabla E(\bm X)$, then $I(\bm X)$ is a conserved function in $t$ along the solutions of \eqref{eq: dispODE}.
\end{lemma}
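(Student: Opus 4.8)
The plan is to mirror the continuous argument of \Cref{lem: bilinearForm}, replacing the $L^2$ pairing and the bilinear form $\mathcal{B}[f]$ with the Euclidean inner product on $\mathbb{R}^k$ and the matrix $B(\bm X)$. The entire proof reduces to the chain rule together with the two defining properties of $B$, so it is genuinely the finite-dimensional, semi-discrete shadow of the earlier lemma.

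First I would establish the dissipation. Differentiating $E(\bm X(t))$ in $t$ and invoking the chain rule (valid since $E\in C^1$ and $\bm X(t)$ is a solution, hence differentiable) gives $\frac{\rd}{\rd t}E(\bm X(t)) = \nabla E(\bm X)^T \frac{\rd\bm X}{\rd t}$. Substituting the ODE \eqref{eq: dispODE} yields $\nabla E(\bm X)^T B(\bm X)\nabla E(\bm X)$, which is a quadratic form in the single vector $\nabla E(\bm X)$. Because $B(\bm X)$ is symmetric and negative semi-definite at every $\bm X$, this quantity is $\leq 0$, so $E(\bm X(t))$ is nonincreasing. This is the exact finite-dimensional analog of the step $\langle \frac{\delta E}{\delta f}, \partial_t f\rangle = \mathcal{B}[f](\frac{\delta E}{\delta f}, \frac{\delta E}{\delta f}) \leq 0$ appearing in \Cref{lem: bilinearForm}.

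For the conservation statement, I would repeat the computation with $I$ replacing $E$ in the left factor only: $\frac{\rd}{\rd t}I(\bm X(t)) = \nabla I(\bm X)^T \frac{\rd\bm X}{\rd t} = \nabla I(\bm X)^T B(\bm X)\nabla E(\bm X)$. The hypothesis $\nabla I(\bm X)\perp B(\bm X)\nabla E(\bm X)$ is precisely the statement that this inner product vanishes identically, whence $\frac{\rd}{\rd t}I(\bm X(t))=0$ and $I$ is conserved along the flow. This plays the role of the condition $\mathcal{B}[f](\frac{\delta I}{\delta f}, \frac{\delta E}{\delta f})=0$ in the continuous setting.

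There is no genuine obstacle here: the result is a direct consequence of the chain rule and the algebraic properties of $B$. The only points requiring care are bookkeeping ones. I would confirm that the sole regularity needed is $E, I\in C^1$, so that the chain rule applies along the (differentiable) solution trajectory, and note that the conservation part uses neither the symmetry nor the definiteness of $B$, only the orthogonality hypothesis; conversely the dissipation inequality uses only negative semi-definiteness. I would nonetheless retain symmetry in the hypotheses to match the discretized bilinear form $\mathcal{B}[f]$ of \eqref{eq: bilinearForm}, so that $B(\bm X)$ is consistently interpreted as its particle-level discretization.
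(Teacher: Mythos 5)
Your proposal is correct and follows essentially the same argument as the paper: chain rule, substitution of the ODE \eqref{eq: dispODE}, negative semi-definiteness of $B(\bm X)$ for the dissipation inequality, and the orthogonality hypothesis $\nabla I(\bm X)\perp B(\bm X)\nabla E(\bm X)$ for conservation. Your side observations (that symmetry of $B$ is not needed for either conclusion, and that the conservation part does not use definiteness) are accurate but do not change the substance, which matches the paper's proof line for line.
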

\begin{proof}
    By negative semi-definiteness of $B$, the solution of \eqref{eq: dispODE} satisfies
\begin{equation*}
    \frac{\rd E(\bm X)}{\rd t} = \nabla E(\bm X)^T\frac{\rd\bm X}{\rd t} = \nabla E(\bm X)^T B(\bm  X)\nabla E(\bm X) \leq 0.
\end{equation*}
Moreover, by hypothesis of $\nabla I$, the solution of \eqref{eq: dispODE} satisfies
    \begin{equation*}
    \frac{\rd I(\bm X)}{\rd t} = \nabla I(\bm X)^T\frac{\rd\bm X}{\rd t} = \nabla I(\bm X)^T B(\bm  X)\nabla E(\bm X) = 0.
\end{equation*}
\end{proof}

\subsection{Discrete gradient integration}

To discretize \eqref{eq: dispODE} while preserving the dissipative quantity $E$, a discrete gradient approximation $\overline{\nabla E}$ of $\nabla E$ can be used\footnote{In terms of accuracy in time, Itoh-Abe discrete gradient is first-order accurate, with midpoint discrete gradient and mean-value or average vector field discrete gradient being second-order accurate.}.

Consider the following discrete gradient scheme of \eqref{eq: dispODE}
\begin{equation}
    \frac{\bm X^{n+1} - \bm X^{n}}{\Delta t} = \tilde{B}(\bm X^{n+1}, \bm X^n)\overline{\nabla E}(\bm X^{n+1}, \bm X^n), \label{eq: discDispODE}
\end{equation} where $\tilde{B}(\bm X^{n+1}, \bm X^n)$ is a symmetric, negative semi-definite matrix \footnote{For second-order accuracy, we also require $\tilde{B}$ to satisfy $\tilde{B}(\bm X^{n+1}, \bm X^n) = \tilde{B}(\bm X^{n+1}, \bm X^n)$ \cite{HWL}, such as $\tilde{B}(\bm X^{n}, \bm X^{n+1}) = B\left(\frac{\bm X^{n+1}+\bm X^n}{2}\right)$.} approximating $B$ near $\bm X^{n+1}$ or $\bm X^n$. Moreover, to preserve a conservative quantity $I$ of \eqref{eq: dispODE} using \eqref{eq: discDispODE}, we further employ discrete gradient approximation for $\nabla I(\bm X)$, leading the following fully-discrete analog of \Cref{lem: bilinearForm} for the dissipative and conservative quantities. 
\begin{lemma}\label{lem: discDispConsQuan}
    $E(\bm X^{n+1})\leq E(\bm X^{n})$ for successive solutions $\bm X^{n}$ and $\bm X^{n+1}$ of \eqref{eq: discDispODE}. In additional, if $\overline{\nabla I}$ is a discrete gradient of $\nabla I$ satisfying $\overline{\nabla I}(\bm X^{n+1},\bm X^n) \perp$\\$\tilde{B}(\bm X^{n+1}, \bm X^n)\overline{\nabla E}(\bm X^{n+1}, \bm X^n)$, then $I(\bm X^{n+1}) = I(\bm X^n)$.
\end{lemma}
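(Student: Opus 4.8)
The plan is to mirror the semi-discrete argument of \Cref{lem: dispConsQuan} almost verbatim, with the defining property \eqref{eq: discGradProp} of the discrete gradient playing the exact role that the chain rule plays in the continuous case. Where the proof of \Cref{lem: dispConsQuan} differentiates $E(\bm X(t))$ along the flow and invokes $\frac{\rd\bm X}{\rd t} = B(\bm X)\nabla E(\bm X)$, I will instead express the energy increment $E(\bm X^{n+1}) - E(\bm X^{n})$ as an \emph{exact} inner product via \eqref{eq: discGradProp} and then substitute the update rule \eqref{eq: discDispODE}. The negative semi-definiteness of $\tilde{B}$ and the orthogonality hypothesis on $\overline{\nabla I}$ will close the two statements respectively.

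For the dissipation statement, I would first apply \eqref{eq: discGradProp} with $\bm Y = \bm X^{n+1}$ and $\bm X = \bm X^{n}$ to write
\[
E(\bm X^{n+1}) - E(\bm X^{n}) = \overline{\nabla E}(\bm X^{n+1}, \bm X^{n})^T (\bm X^{n+1} - \bm X^{n}).
\]
Next I would substitute $\bm X^{n+1} - \bm X^{n} = \Delta t\, \tilde{B}(\bm X^{n+1}, \bm X^{n})\overline{\nabla E}(\bm X^{n+1}, \bm X^{n})$ from \eqref{eq: discDispODE}, yielding the quadratic form
\[
E(\bm X^{n+1}) - E(\bm X^{n}) = \Delta t\, \overline{\nabla E}(\bm X^{n+1}, \bm X^{n})^T \tilde{B}(\bm X^{n+1}, \bm X^{n})\overline{\nabla E}(\bm X^{n+1}, \bm X^{n}),
\]
which is $\leq 0$ by the assumed negative semi-definiteness of $\tilde{B}$, giving $E(\bm X^{n+1}) \leq E(\bm X^{n})$. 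For the conservation statement I would repeat the same two steps with $I$ in place of $E$: apply \eqref{eq: discGradProp} to obtain $I(\bm X^{n+1}) - I(\bm X^{n}) = \overline{\nabla I}(\bm X^{n+1}, \bm X^{n})^T (\bm X^{n+1} - \bm X^{n})$, substitute the update rule \eqref{eq: discDispODE} so that the right-hand side becomes $\Delta t$ times the inner product of $\overline{\nabla I}(\bm X^{n+1}, \bm X^{n})$ with $\tilde{B}(\bm X^{n+1}, \bm X^{n})\overline{\nabla E}(\bm X^{n+1}, \bm X^{n})$, and conclude that this vanishes precisely by the orthogonality hypothesis. Hence $I(\bm X^{n+1}) = I(\bm X^{n})$.

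I anticipate no genuine obstacle in the computation itself; the entire content is front-loaded into the two structural ingredients — the discrete gradient identity \eqref{eq: discGradProp} and the symmetry/definiteness of $\tilde{B}$ — so the argument is a direct discrete analog of \Cref{lem: dispConsQuan}. The only point requiring care is that the orthogonality hypothesis must be imposed on the specific \emph{evaluated} vector $\tilde{B}(\bm X^{n+1}, \bm X^{n})\overline{\nabla E}(\bm X^{n+1}, \bm X^{n})$, rather than on the continuous $B(\bm X)\nabla E(\bm X)$, so that substituting the update rule lands exactly on the orthogonal complement. Since this is already how the hypothesis is phrased in the statement, the substitution produces an honest zero and the argument closes cleanly.
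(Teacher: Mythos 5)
Your proposal is correct and coincides with the paper's own proof: both use the discrete gradient identity \eqref{eq: discGradProp} to write each increment exactly, substitute the update rule \eqref{eq: discDispODE}, and then invoke negative semi-definiteness of $\tilde{B}$ for the energy and the orthogonality hypothesis for $I$. No differences in approach or content.
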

\begin{proof}
    It follows from \eqref{eq: discGradProp} that successive solutions of \eqref{eq: discDispODE} satisfy
\begin{equation*}
    E(\bm X^{n+1})-E(\bm X^{n}) = \Delta t\overline{\nabla E}(\bm X^{n+1}, \bm X^{n})^T \tilde{B}(\bm X^{n+1}, \bm X^n)\overline{\nabla E}(\bm X^{n+1}, \bm X^n) \leq 0. \label{eq: discEDisp}
\end{equation*}
    Moreover, by hypothesis of $\overline{\nabla I}$, the successive solutions of \eqref{eq: discDispODE} satisfy
    \begin{align*}
    I(\bm X^{n+1})-I(\bm X^{n}) &= \overline{\nabla I}(\bm X^{n+1},\bm X^n)^T(\bm X^{n+1}-\bm X^n) \\
    &= \Delta t\overline{\nabla I}(\bm X^{n+1},\bm X^n)^T\tilde{B}(\bm X^{n+1}, \bm X^n)\overline{\nabla E}(\bm X^{n+1}, \bm X^n) = 0.
\end{align*}
\end{proof}

\subsection{Example: aggregation-diffusion equation}
The particle method derived in \eqref{eq:schemeA2} can be written in the form of \eqref{eq: dispODE} as
\begin{equation}
\frac{\rd\bm X}{\rd t} = -W^{-1}\nabla E_A^\varepsilon (\bm X), \label{eq: aggDiffODE}
\end{equation} where $B(\bm X)=-W^{-1}$ with $W=\text{diag}(w_1,\dots, w_N)\otimes I_d$ being a diagonal particle weight matrix of size $dN\times dN$.
\begin{lemma}
    $E_A^\varepsilon(\bm X)$ is a dissipative quantity of \eqref{eq: aggDiffODE}.
\end{lemma}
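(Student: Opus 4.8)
The plan is to recognize \eqref{eq: aggDiffODE} as a special instance of the general dissipative ODE \eqref{eq: dispODE} and then invoke \Cref{lem: dispConsQuan} directly. First I would set $\bm X^T = (\bm x_1^T,\dots,\bm x_N^T) \in \mathbb{R}^{dN}$, take $E = E_A^\varepsilon$ as the scalar function, and identify the matrix $B(\bm X) = -W^{-1}$. With these identifications, \eqref{eq: aggDiffODE} is literally of the form $\frac{\rd\bm X}{\rd t} = B(\bm X)\nabla E_A^\varepsilon(\bm X)$ required in \eqref{eq: dispODE}, where the componentwise equivalence with the particle scheme \eqref{eq:schemeA2} is precisely the identity furnished by \Cref{cor}.

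The only substantive verification is that $B(\bm X) = -W^{-1}$ meets the structural hypotheses of \Cref{lem: dispConsQuan}, namely that it is symmetric and negative semi-definite. Since $W = \diag(w_1,\dots,w_N)\otimes I_d$ is diagonal with strictly positive entries (the particle weights $w_p = h^d f^0(\bm x_i) > 0$ are positive constants), its inverse $W^{-1}$ is again diagonal with positive entries, hence symmetric and positive definite. Consequently $-W^{-1}$ is symmetric and negative definite, and in particular negative semi-definite, as needed. I would also note that $E_A^\varepsilon$ is $C^1$ in $\bm X$: the regularization by the smooth Gaussian mollifier $\varphi_\varepsilon$ (and the assumed smoothness of $V$ and $W$) guarantees that the gradient $\nabla_{\bm x_p} E_A^\varepsilon$ given explicitly through \eqref{schemeA1}--\eqref{eq:heps2} is well-defined and continuous.

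With these two facts in hand, the conclusion is immediate from \Cref{lem: dispConsQuan}: along solutions of \eqref{eq: aggDiffODE} one has
\begin{equation*}
\frac{\rd}{\rd t} E_A^\varepsilon(\bm X) = \nabla E_A^\varepsilon(\bm X)^T \frac{\rd\bm X}{\rd t} = -\nabla E_A^\varepsilon(\bm X)^T W^{-1} \nabla E_A^\varepsilon(\bm X) \leq 0,
\end{equation*}
so $E_A^\varepsilon(\bm X(t))$ is nonincreasing in $t$, which is exactly the asserted dissipation. In this sense there is no real obstacle: the entire content has already been packaged into \Cref{lem: dispConsQuan} and \Cref{cor}, and the proof reduces to checking positivity of the weights. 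If anything, the point requiring a moment of care is simply articulating why $-W^{-1}$ inherits the negative semi-definiteness from positivity of the $w_p$, since this is the structural property that makes the abstract lemma applicable here.
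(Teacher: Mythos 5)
Your proposal is correct and follows exactly the paper's route: the paper's proof is the one-line observation that $B(\bm X)=-W^{-1}$ is symmetric and negative semi-definite, so \Cref{lem: dispConsQuan} applies. Your additional verifications (positivity of the weights $w_p$ making $-W^{-1}$ negative definite, and $C^1$ smoothness of $E_A^\varepsilon$ via the Gaussian mollifier) merely make explicit what the paper leaves implicit.
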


\begin{proof}
    Since $B$ is symmetric, negative semi-definite, \eqref{eq: aggDiffODE} has a dissipative quantity $E_A^\varepsilon(\bm X)$ by Lemma \ref{lem: dispConsQuan}.
\end{proof}

\begin{lemma}
The discrete gradient scheme \eqref{eq: discDispODE} with $\tilde{B}(\bm X^{n+1},\bm X^n)=-W^{-1}$,
\begin{equation}
    \frac{\bm X^{n+1} - \bm X^{n}}{\Delta t} = -W^{-1}\overline{\nabla E_A^{\varepsilon}}(\bm X^{n+1}, \bm X^n), \label{eq: discAggDiff}
\end{equation} preserves the dissipative quantity $E_A^\varepsilon(\bm X)$.
\end{lemma}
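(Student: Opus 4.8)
The plan is to recognize \eqref{eq: discAggDiff} as a special instance of the abstract discrete gradient scheme \eqref{eq: discDispODE} and then invoke \Cref{lem: discDispConsQuan} directly. Indeed, \eqref{eq: discAggDiff} has exactly the form of \eqref{eq: discDispODE} with the choice $\tilde{B}(\bm X^{n+1},\bm X^n) = -W^{-1}$ and with the discrete gradient taken to be $\overline{\nabla E_A^{\varepsilon}}$. Consequently, all that remains is to check that this particular $\tilde{B}$ meets the hypotheses required by \Cref{lem: discDispConsQuan}, namely that it is symmetric and negative semi-definite.

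First I would verify the matrix hypothesis. Because $W = \diag(w_1,\dots,w_N)\otimes I_d$ is diagonal with strictly positive diagonal entries (the particle weights $w_p = h^d f^0(\bm x_i)>0$ are positive and constant), its inverse $W^{-1}$ is a well-defined diagonal matrix with positive diagonal entries, hence symmetric and positive definite. Therefore $\tilde{B} = -W^{-1}$ is symmetric and negative definite, and in particular negative semi-definite, as required. I would also note in passing that this $\tilde{B}$ is independent of $\bm X^{n+1}$ and $\bm X^n$, so the symmetry condition $\tilde{B}(\bm X^{n+1},\bm X^n) = \tilde{B}(\bm X^n,\bm X^{n+1})$ flagged in the footnote to \eqref{eq: discDispODE} holds trivially.

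With the hypothesis verified, the energy dissipation is a direct consequence of \Cref{lem: discDispConsQuan}: applying the defining property \eqref{eq: discGradProp} of the discrete gradient to $E_A^{\varepsilon}$ and then substituting \eqref{eq: discAggDiff} yields
\begin{align*}
E_A^{\varepsilon}(\bm X^{n+1}) - E_A^{\varepsilon}(\bm X^{n}) &= \overline{\nabla E_A^{\varepsilon}}(\bm X^{n+1}, \bm X^n)^T(\bm X^{n+1}-\bm X^n)\\
&= -\Delta t\, \overline{\nabla E_A^{\varepsilon}}(\bm X^{n+1}, \bm X^n)^T W^{-1}\overline{\nabla E_A^{\varepsilon}}(\bm X^{n+1}, \bm X^n) \leq 0,
\end{align*}
where the final inequality uses the positive definiteness of $W^{-1}$. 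I expect no genuine obstacle here, since the statement is purely a specialization of the abstract lemma; the only point demanding any care is confirming that the concrete matrix $-W^{-1}$ truly satisfies the symmetric negative semi-definiteness hypothesis, and this is immediate from the positivity of the particle weights.
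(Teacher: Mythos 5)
Your proposal is correct and follows exactly the paper's route: the paper's own proof is the one-line observation that $\tilde{B}=-W^{-1}$ is symmetric and negative semi-definite, so \Cref{lem: discDispConsQuan} applies. Your additional details---positivity of the weights $w_p$ making $-W^{-1}$ negative definite, and the explicit dissipation computation via \eqref{eq: discGradProp}---merely unpack the same argument.
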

\begin{proof}
    Since $\tilde{B}$ is symmetric, negative semi-definite, \eqref{eq: discAggDiff} preserves the dissipative quantity $E_A^\varepsilon(\bm X)$ by Lemma \ref{lem: discDispConsQuan}.
\end{proof}
\noindent Also, the mass $m(\bm X)=\frac{1}{d}\bm 1^TW\bm 1=\displaystyle \sum_{p=1}^N w_p$ is conserved, as $\nabla m=\bm 0 = \overline{\nabla m}$.

\subsection{Example: Landau equation}
The particle method derived in \eqref{eq:schemeL2} can be written in the form of \eqref{eq: dispODE} as
\begin{equation}
\frac{\rd\bm X}{\rd t} = -W^{-1}\mathcal{A}(\bm X)W^{-1}\nabla E_L^\varepsilon (\bm X), \label{eq: landauODE}
\end{equation} where $B(\bm X)=-W^{-1}\mathcal{A}(\bm X) W^{-1}$ with $W$ as the diagonal particle weight matrix as before, and $\mathcal{A}$ is the $dN\times dN$ matrix with $d\times d$ submatrix entries for $i,j=1,\dots,N$ given by
\begin{equation}
    [\mathcal{A}(\bm X)]_{ij} = \begin{cases}
        \displaystyle w_i\sum_{\substack{k=1\\ k\neq i}}^N w_k A(\bm x_i-\bm x_k), & \text{ if } i=j,\\
        -w_i w_j A(\bm x_i-\bm x_j), & \text{ if } i\neq j.
    \end{cases} \label{eq: Amatrix}
\end{equation}

\begin{lemma}
    $E_L^\varepsilon(\bm X)$ is a dissipative quantity of \eqref{eq: landauODE}. 
\end{lemma}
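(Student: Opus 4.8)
The plan is to verify the two structural hypotheses required by \Cref{lem: dispConsQuan}, namely that $B(\bm X) = -W^{-1}\mathcal{A}(\bm X)W^{-1}$ is symmetric and negative semi-definite; the dissipation of $E_L^\varepsilon$ along solutions of \eqref{eq: landauODE} then follows immediately from that lemma. Since $W = \diag(w_1,\dots,w_N)\otimes I_d$ is diagonal with positive entries, $W^{-1}$ is symmetric, so $B^T = -W^{-1}\mathcal{A}(\bm X)^T W^{-1}$ and the symmetry of $B$ reduces to $\mathcal{A}(\bm X) = \mathcal{A}(\bm X)^T$. I would establish this block-by-block from \eqref{eq: Amatrix}: the diagonal blocks are sums of the symmetric matrices $A(\bm x_i - \bm x_k)$, while for $i \ne j$ one has $[\mathcal{A}]_{ji}^T = -w_iw_j A(\bm x_j - \bm x_i)^T = -w_iw_j A(\bm x_i - \bm x_j) = [\mathcal{A}]_{ij}$, using both $A = A^T$ and the evenness $A(\bm x) = A(-\bm x)$.

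For negative semi-definiteness, the substitution $\bm\eta = W^{-1}\bm\xi$ turns $\bm\xi^T B(\bm X)\bm\xi \le 0$ into the equivalent claim that $\mathcal{A}(\bm X)$ is positive semi-definite, i.e. $\bm\eta^T\mathcal{A}(\bm X)\bm\eta \ge 0$ for every $\bm\eta$. Partitioning $\bm\eta^T = (\bm\eta_1^T,\dots,\bm\eta_N^T)$ into $d$-vectors and expanding with \eqref{eq: Amatrix}, the crux is to recognize the discrete counterpart of the symmetrization already performed for the continuous bilinear form, rewriting
\[
\bm\eta^T\mathcal{A}(\bm X)\bm\eta = \frac{1}{2}\sum_{i,j=1}^N w_iw_j(\bm\eta_i - \bm\eta_j)^T A(\bm x_i - \bm x_j)(\bm\eta_i - \bm\eta_j).
\]
Because $A(\bm x_i - \bm x_j)$ is positive semi-definite and the weights are positive, each summand is nonnegative, giving $\bm\eta^T\mathcal{A}(\bm X)\bm\eta \ge 0$ as desired.

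I expect the only nontrivial bookkeeping to be the verification of the displayed identity: after expanding the quadratic $(\bm\eta_i - \bm\eta_j)^T A(\bm x_i-\bm x_j)(\bm\eta_i - \bm\eta_j)$, one relabels $i \leftrightarrow j$ and uses the symmetry and evenness of $A$ to show that the two ``square'' terms reproduce the diagonal blocks $w_i\sum_{k\ne i}w_k A(\bm x_i - \bm x_k)$ of $\mathcal{A}$ and the two ``cross'' terms reproduce the off-diagonal blocks $-w_iw_jA(\bm x_i-\bm x_j)$; the diagonal $i = j$ contributions drop out automatically since $\bm\eta_i - \bm\eta_j$ vanishes there, so no assumption on $A(\bm 0)$ is needed. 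With $\mathcal{A}(\bm X)$ positive semi-definite and $B(\bm X)$ thereby symmetric and negative semi-definite, \Cref{lem: dispConsQuan} yields that $E_L^\varepsilon(\bm X)$ is dissipative. Everything else is a direct appeal to the properties of $A$ and the already-established semi-discrete lemma, so I anticipate no genuine obstacle beyond getting this index manipulation right.
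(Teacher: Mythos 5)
Your proposal is correct and follows essentially the same route as the paper's proof: symmetry of $B(\bm X)$ via $A=A^T$ and $A(-\bm x)=A(\bm x)$, positive semi-definiteness of $\mathcal{A}(\bm X)$ by the discrete symmetrization identity (your $\frac{1}{2}\sum_{i,j}$ form is just the paper's $\sum_{i<j}$ sum written symmetrically), conjugation by $W^{-1}$ to get negative semi-definiteness of $B$, and then an appeal to \Cref{lem: dispConsQuan}. Your explicit remark that the $i=j$ terms vanish without any assumption on $A(\bm 0)$ is a small point the paper handles implicitly by summing over $i<j$, but it does not change the argument.
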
 
\begin{proof}
    It suffices to show that $B(\bm X)$ is symmetric, negative semi-definite by Lemma \ref{lem: dispConsQuan}. First, since the $d\times d$ matrix $A$ is symmetric, $\mathcal{A}$ is symmetric and so $B(\bm X)=-W^{-1}\mathcal{A}(\bm X)W^{-1}$ is symmetric. Second, $\mathcal{A}$ is positive semi-definite, since $A(-\bm x)=A(\bm x)$ and for any $\bm Y^T=(\bm y_1^T,\dots, \bm y_N^T)\in \mathbb{R}^{dN}$
\begin{align*}
    &\bm Y^T\mathcal{A}(\bm X)\bm Y  = \sum_{i=1}^N \bm y_i^T\mathcal{A}(\bm X)_{ii} \bm y_i + \sum_{i=1}^N \sum_{i\neq j}^N \bm y_i^T\mathcal{A}(\bm X)_{ij} \bm y_j\\
    &= \sum_{i=1}^N \bm y_i^T\left(w_i \sum_{i\neq j}^N w_j  A(\bm x_i-\bm x_j)\right) \bm y_i+ \sum_{i=1}^N\sum_{i\neq j}^N \bm y_i^T\left(-w_i w_j A(\bm x_i-\bm x_j)\right) \bm y_j \\
    &= \sum_{i=1}^N \sum_{i<j}^N w_i w_j\left(\bm y_i^T A(\bm x_i-\bm x_j)\bm y_i + \bm y_j^T A(\bm x_j-\bm x_i)\bm y_j\right) \\
    &\hskip 4mm - \sum_{i=1}^N\sum_{i<j}^N w_i w_j 
    \left(\bm y_i^T A(\bm x_i-\bm x_j)\bm y_j + \bm y_j^T A(\bm x_j-\bm x_i)\bm y_i\right) \\
    &= \sum_{i=1}^N \sum_{i<j}^N w_i w_j (\bm y_i-\bm y_j)^T A(\bm x_i-\bm x_j) (\bm y_i - \bm y_j) \geq 0,
\end{align*} where the last steps follow from positive $w_i$ and $A$ being positive semi-definite. Thus, $B(\bm X)$ is negative semi-definite since for any $\bm Y$
\begin{equation*}
    \bm Y^TB(\bm X)\bm Y = -\bm (W^{-1}\bm Y)^T\mathcal{A}(\bm X)(W^{-1}\bm Y) \leq 0.
\end{equation*}
\end{proof}

Similar to the aggregation-diffusion equation, mass $m(\bm X)$ is trivially conserved. Moreover, momentum $\bm P(\bm X) =\bm 1^T\otimes I_d W\bm X = \displaystyle \sum_{p=1}^N w_p \bm x_p$ and kinetic energy $K(\bm X) =\frac{1}{2}\bm X^TW\bm X= \displaystyle \frac{1}{2}\sum_{p=1}^N w_p |\bm x_p|^2$ are conserved, where $\bm 1^T\otimes I_d$ is a $d\times dN$ matrix $\begin{pmatrix}I_d \cdots I_d\end{pmatrix}$.
\begin{lemma}
    $\bm P(\bm X)$ and $K(\bm X)$ are conserved quantities of \eqref{eq: landauODE}.
\end{lemma}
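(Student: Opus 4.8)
The plan is to invoke \Cref{lem: dispConsQuan}. Since \eqref{eq: landauODE} is of the form \eqref{eq: dispODE} with the symmetric, negative semi-definite matrix $B(\bm X) = -W^{-1}\mathcal{A}(\bm X)W^{-1}$, it suffices to verify that $\nabla \bm P$ and $\nabla K$ are orthogonal to $B(\bm X)\nabla E_L^{\varepsilon}(\bm X)$, i.e.\ that $\nabla P_\ell(\bm X)^T B(\bm X)\nabla E_L^{\varepsilon}(\bm X) = 0$ for each component $\ell = 1,\dots,d$ and that $\nabla K(\bm X)^T B(\bm X)\nabla E_L^{\varepsilon}(\bm X) = 0$. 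The gradients are immediate: from $\bm P = \sum_p w_p \bm x_p$ and $K = \tfrac12\sum_p w_p|\bm x_p|^2$ one gets $\nabla_{\bm x_p}P_\ell = w_p \bm e_\ell$ and $\nabla_{\bm x_p}K = w_p\bm x_p$, so in block form the vector $W^{-1}\nabla P_\ell$ carries the constant block $\bm e_\ell$ in every slot, while $W^{-1}\nabla K = \bm X$ carries the block $\bm x_p$.

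The key tool is the polarized version of the identity already established when proving that $\mathcal{A}$ is positive semi-definite. That computation gives $\bm Y^T\mathcal{A}(\bm X)\bm Y = \sum_{i<j} w_i w_j(\bm y_i - \bm y_j)^T A(\bm x_i-\bm x_j)(\bm y_i-\bm y_j)$; since this holds for all $\bm Y$ and both $\mathcal{A}$ and each $A$ are symmetric, polarization upgrades it to the bilinear identity
\begin{equation*}
\bm Y^T\mathcal{A}(\bm X)\bm Z = \sum_{i<j} w_i w_j(\bm y_i - \bm y_j)^T A(\bm x_i-\bm x_j)(\bm z_i-\bm z_j).
\end{equation*}
Using symmetry of $W$ to write $\nabla I^T B(\bm X)\nabla E_L^{\varepsilon} = -(W^{-1}\nabla I)^T \mathcal{A}(\bm X)(W^{-1}\nabla E_L^{\varepsilon})$, I then apply this identity with $\bm Y = W^{-1}\nabla I$ and $\bm Z = W^{-1}\nabla E_L^{\varepsilon}$, where by \Cref{cor} the block $\bm z_p = h^{\varepsilon}(\bm x_p)$.

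For momentum, $\bm Y = W^{-1}\nabla P_\ell$ has identical blocks $\bm e_\ell$, so every difference $\bm y_i - \bm y_j$ vanishes and the sum is zero; hence each component of $\bm P$ is conserved. For kinetic energy, $\bm Y = W^{-1}\nabla K = \bm X$ gives $\bm y_i - \bm y_j = \bm x_i - \bm x_j$, so each summand contains the factor $(\bm x_i-\bm x_j)^T A(\bm x_i-\bm x_j)$, which vanishes because $A(\bm z)$ is symmetric with $A(\bm z)\bm z = \bm 0$; the sum is again zero and $K$ is conserved. The only step requiring care is this degeneracy property $A(\bm z)\bm z = \bm 0$ of the Landau collision matrix, which is precisely the discrete counterpart of the continuous identity $B[f](\tfrac{\delta \bm I_L}{\delta f},\tfrac{\delta E_L}{\delta f}) = \bm 0$ from \Cref{lem: bilinearForm}; it is the one structural fact that makes kinetic-energy conservation work, while everything else reduces to bookkeeping with the block structure of $W$ and $\mathcal{A}$.
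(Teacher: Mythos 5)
Your proof is correct, and it rests on the same pillars as the paper's: both invoke \Cref{lem: dispConsQuan} and reduce everything to the two structural facts about $\mathcal{A}(\bm X)$ in \eqref{eq: Amatrix}, namely that constant block vectors and $\bm X$ itself annihilate it. The difference is in how you verify this. The paper checks two matrix identities directly: $(\bm 1^T\otimes I_d)\mathcal{A}(\bm X)=\bm 0^T\otimes I_d$ (zero blockwise column sums, giving momentum) and $\bm X^T\mathcal{A}(\bm X)=\bm 0^T$ (giving kinetic energy), the latter stated with minimal justification. You instead polarize the quadratic-form computation from the positive semi-definiteness proof into the bilinear identity
\begin{equation*}
\bm Y^T\mathcal{A}(\bm X)\bm Z=\sum_{i<j}w_iw_j(\bm y_i-\bm y_j)^TA(\bm x_i-\bm x_j)(\bm z_i-\bm z_j),
\end{equation*}
which is legitimate since both sides are bilinear forms of symmetric matrices agreeing on the diagonal (one can also verify it directly by the same pairing argument used in the PSD proof, using $A(-\bm x)=A(\bm x)$). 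This buys a unified treatment: momentum conservation becomes ``identical blocks have zero differences'' and kinetic-energy conservation becomes ``$(\bm x_i-\bm x_j)^TA(\bm x_i-\bm x_j)=\bm 0^T$ by $A(\bm z)\bm z=\bm 0$ and symmetry of $A$,'' thereby supplying explicitly the computation the paper compresses into ``it follows that $\bm X^T\mathcal{A}(\bm X)=\bm 0^T$.'' You also correctly identify $A(\bm z)\bm z=\bm 0$ as the discrete counterpart of the degeneracy $\mathcal{B}[f](\frac{\delta\bm I_L}{\delta f},\frac{\delta E_L}{\delta f})=\bm 0$ from \Cref{lem: bilinearForm}. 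One small remark: your observation that the blocks of $W^{-1}\nabla E_L^\varepsilon$ equal $h^\varepsilon(\bm x_p)$ via \Cref{cor} is true but unnecessary, since your identity kills the pairing for arbitrary $\bm Z$; the paper likewise never needs the explicit form of $\nabla E_L^\varepsilon$ here.
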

\begin{proof}
By Lemma \ref{lem: dispConsQuan}, it suffices to show that $\nabla \bm P(\bm X) \perp B(\bm X) \nabla E_L^\varepsilon(\bm X)$ and $\nabla K(\bm X)\perp B(\bm X) \nabla E_L^\varepsilon(\bm X)$. First note that $\nabla \bm P(\bm X) = (\bm 1^T\otimes I_d) W$ and since the matrix $\mathcal{A}$ of \eqref{eq: Amatrix} has zero blockwise column sum, $(\bm 1^T\otimes I_d)\mathcal{A}(\bm X) = \bm 0^T\otimes I_d$, implying
\begin{align*}
    \nabla \bm P(\bm X)^T B(\bm X) \nabla E_L^\varepsilon(\bm X) &= -\left(\bm 1^T\otimes I_d\right)W W^{-1} \mathcal{A}(\bm X)W^{-1}\nabla E_L^\varepsilon(\bm X) \\
    &= -\left(\bm 0^T\otimes I_d \right)W^{-1}\nabla E_L^\varepsilon(\bm X)= \bm 0.
\end{align*}
Also, it follows that $\nabla K(\bm X) = W\bm X$ and $\bm X^T \mathcal{A}(\bm X) = \bm 0^T$, leading to
\begin{align*}
    \nabla K(\bm X)^T B(\bm X) \nabla E_L^\varepsilon(\bm X) &=- \bm X^TW W^{-1}\mathcal{A}(\bm X)W^{-1}\nabla E_L^\varepsilon(\bm X) \\
    &=- \bm 0^TW^{-1}\nabla E_L^\varepsilon(\bm X) = 0.
\end{align*}
\end{proof}

\begin{lemma}
The discrete gradient scheme \eqref{eq: discDispODE} with $\tilde{B}(\bm X^{n+1},\bm X^n)=$\\$-W^{-1}\mathcal{A}\left(\frac{\bm X^{n+1}+\bm X^n}{2}\right)W^{-1}$,
\begin{equation}
    \frac{\bm X^{n+1} - \bm X^{n}}{\Delta t} = -W^{-1}\mathcal{A}\left(\frac{\bm X^{n+1}+\bm X^n}{2}\right)W^{-1}\overline{\nabla E_L^{\varepsilon}}(\bm X^{n+1}, \bm X^n), \label{eq: discLandau}
\end{equation} preserves the dissipative quantity $E_A^\varepsilon(\bm X)$ and the conservative quantities $\bm P$ and $K$.
\end{lemma}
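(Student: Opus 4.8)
The plan is to invoke \Cref{lem: discDispConsQuan}, so the entire argument reduces to checking its hypotheses for the scheme \eqref{eq: discLandau}: that $\tilde{B}$ is symmetric and negative semi-definite, and that appropriately chosen discrete gradients $\overline{\nabla \bm P}$ and $\overline{\nabla K}$ are orthogonal to $\tilde{B}\,\overline{\nabla E_L^\varepsilon}$. Writing $\bar{\bm X} := \tfrac{1}{2}(\bm X^{n+1}+\bm X^n)$, the matrix $\tilde{B}(\bm X^{n+1},\bm X^n)=-W^{-1}\mathcal{A}(\bar{\bm X})W^{-1}$ is simply the matrix $B$ of \eqref{eq: landauODE} evaluated at $\bar{\bm X}$. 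Since the earlier lemma established that $B(\bm X)$ is symmetric negative semi-definite at \emph{every} configuration, the same holds at $\bar{\bm X}$, and the first part of \Cref{lem: discDispConsQuan} immediately gives $E_L^\varepsilon(\bm X^{n+1})\leq E_L^\varepsilon(\bm X^n)$.

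For the two conserved quantities I would select the discrete gradients dictated by the polynomial degree of $\bm P$ and $K$. Since $\bm P(\bm X)=(\bm 1^T\otimes I_d)W\bm X$ is linear, I take $\overline{\nabla \bm P}=\nabla \bm P=(\bm 1^T\otimes I_d)W$, a valid discrete gradient. Since $K(\bm X)=\tfrac12\bm X^TW\bm X$ is quadratic, the identity $K(\bm Y)-K(\bm X)=\bigl(W\tfrac{\bm X+\bm Y}{2}\bigr)^T(\bm Y-\bm X)$ together with \eqref{eq: discGradProp} shows that the midpoint discrete gradient $\overline{\nabla K}(\bm X^{n+1},\bm X^n)=W\bar{\bm X}$ is exact. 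With these choices and $W^T=W$, the orthogonality conditions reduce to the contractions $(\bm 1^T\otimes I_d)\mathcal{A}(\bar{\bm X})$ and $\bar{\bm X}^T\mathcal{A}(\bar{\bm X})$ applied to $W^{-1}\overline{\nabla E_L^\varepsilon}$ — precisely the expressions appearing in the conservation proof for \eqref{eq: landauODE}, now evaluated at $\bar{\bm X}$.

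The momentum condition is then immediate from the zero blockwise column-sum $(\bm 1^T\otimes I_d)\mathcal{A}(\bar{\bm X})=\bm 0^T\otimes I_d$, a purely algebraic identity that holds for $\mathcal{A}$ evaluated at any point. The subtle step — and what I expect to be the main obstacle — is the kinetic-energy condition, which requires $\bar{\bm X}^T\mathcal{A}(\bar{\bm X})=\bm 0^T$. This is the identity from the continuous conservation lemma, proved blockwise using $A(-\bm z)=A(\bm z)$ and $A(\bm z)\bm z=\bm 0$, so that the $j$-th block collapses to $w_j\sum_{k\neq j}w_k(\bar{\bm x}_j-\bar{\bm x}_k)^TA(\bar{\bm x}_j-\bar{\bm x}_k)=\bm 0^T$.

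The crucial point, and the reason the midpoint evaluation $\mathcal{A}(\bar{\bm X})$ inside $\tilde{B}$ is not arbitrary, is that the vector contracted against $\mathcal{A}(\bar{\bm X})$ must be the \emph{same} midpoint configuration $\bar{\bm X}$ sitting inside $\mathcal{A}$; the midpoint discrete gradient of the quadratic $K$ supplies exactly $W\bar{\bm X}$, matching the argument of $\mathcal{A}$, whereas any other symmetric approximation of $\mathcal{A}$ in $\tilde{B}$ would break this cancellation. Having verified all three hypotheses, \Cref{lem: discDispConsQuan} yields the dissipation of $E_L^\varepsilon$ and the simultaneous conservation of $\bm P$ and $K$.
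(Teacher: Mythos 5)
Your proposal is correct and follows essentially the same route as the paper's own proof: both reduce the claim to \Cref{lem: discDispConsQuan} by taking $\overline{\nabla \bm P}=(\bm 1^T\otimes I_d)W$ and $\overline{\nabla K}=W\bigl(\tfrac{\bm X^{n+1}+\bm X^n}{2}\bigr)$, and then verifying orthogonality via the zero blockwise column sum $(\bm 1^T\otimes I_d)\mathcal{A}(\bar{\bm X})=\bm 0^T\otimes I_d$ and the identity $\bar{\bm X}^T\mathcal{A}(\bar{\bm X})=\bm 0^T$. Your explicit check that $W\bar{\bm X}$ satisfies the discrete gradient property \eqref{eq: discGradProp} for the quadratic $K$, and your remark that the midpoint evaluation of $\mathcal{A}$ in $\tilde{B}$ must match the midpoint in $\overline{\nabla K}$ for the cancellation $A(\bm z)\bm z=\bm 0$ to apply, make explicit details the paper leaves implicit.
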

%\sv{I think equation \ref{eq: discLandau} should use $\overline{\nabla E^{\varepsilon}_L}$ rather than $\overline{\nabla E^{\varepsilon}_A}$}
\begin{proof}
    Since $-\mathcal{A}$ is symmetric negative semi-definite and so is $-W^{-1}\mathcal{A}W^{-1}$, \eqref{eq: discLandau} preserves the dissipative quantity $E_L^\varepsilon(\bm X)$ by Lemma \ref{lem: discDispConsQuan}. Now to show that \eqref{eq: discLandau} preserves conservative quantities $\bm P$ and $K$, it suffices to verify by Lemma \ref{lem: discDispConsQuan} that both $\overline{\nabla \bm P}(\bm X^{n+1},\bm X^n),  \overline{\nabla K}(\bm X^{n+1},\bm X^n)$ are orthogonal to $\tilde{B}(\bm X^{n+1},\bm X^n)\overline{\nabla E_L^\varepsilon}(\bm X^{n+1}, \bm X^n)$.
    First, note that $\overline{\nabla \bm P}(\bm X^{n+1},\bm X^n) = (\bm 1^T\otimes I_d) W$ and $(\bm 1^T\otimes I_d)\mathcal{A}\left(\frac{\bm X^{n+1}+\bm X^n}{2}\right) = \bm 0^T\otimes I_d$, which shows
    \begin{align*}
        &\overline{\nabla \bm P}(\bm X^{n+1},\bm X^n)^T\tilde{B}(\bm X^{n+1},\bm X^n)\overline{\nabla E_L^\varepsilon}(\bm X^{n+1}, \bm X^n) \\
        &= -\left(\bm 1^T\otimes I_d\right)W W^{-1} \mathcal{A}\left(\frac{\bm X^{n+1}+\bm X^n}{2}\right)W^{-1}\overline{\nabla E_L^\varepsilon}(\bm X^{n+1},\bm X^n)\\
        &= -\left(\bm 0^T\otimes I_d \right)W^{-1}\overline{\nabla E_L^\varepsilon}(\bm X^{n+1},\bm X^n) = \bm 0.
    \end{align*}
    Second, it follows that $\overline{\nabla K}(\bm X^{n+1},\bm X^n) = W\left(\frac{\bm X^{n+1}+\bm X^n}{2}\right)$ and \\$\left(\frac{\bm X^{n+1}+\bm X^n}{2}\right)^T \mathcal{A}\left(\frac{\bm X^{n+1}+\bm X^n}{2}\right) = \bm 0^T$, leading to
\begin{align*}
    &\overline{\nabla K}(\bm X^{n+1},\bm X^n)^T \tilde{B}(\bm X^{n+1},\bm X^n) \overline{\nabla E_L^\varepsilon}(\bm X^{n+1},\bm X^n) \\
    &= -\left(\frac{\bm X^{n+1}+\bm X^n}{2}\right)^TW W^{-1}\mathcal{A}\left(\frac{\bm X^{n+1}+\bm X^n}{2}\right)W^{-1}\overline{\nabla E_L^\varepsilon}(\bm X^{n+1},\bm X^n)\\
    &= -\bm 0^TW^{-1}\overline{\nabla E_L^\varepsilon}(\bm X^{n+1},\bm X^n) = 0.
\end{align*}
\end{proof}

%%%%%%%%%%%%%%%%%%%%%%%%%%
\section{Numerical examples}
\label{sec:Examples}
In this section, we present several numerical examples to verify the theoretical results of \Cref{sec:ODEform}.  Examples \ref{example:heat eq}, \ref{example:porous medium eq}, \ref{example: linear fokker-planck eq}, \ref{example: non-local fokker-planck eq} correspond to the aggregation-diffusion equation \eqref{eq:GFP} with one spatial dimension and are similar to the examples given in \cite{BCH20}. 
 Examples \ref{example: Landau eq Maxwellian} and \ref{example: Landau eq Coulomb} correspond to the Landau equation \eqref{eq:Landau1} with two spatial dimensions and are similar to the examples given in \cite{CHWW20}.  To compare the particle solution $f^N$ with an analytical solution $f$, the particle solution is convolved with the Gaussian mollifier function
 \begin{equation}\label{eq:blob solution}
 \begin{aligned}
    f^{N}_{\varepsilon}(t,\bbx) :=& \;(\varphi_{\varepsilon}*f^N)(t,\bbx) = \sum_{p=1}^Nw_p\varphi_{\varepsilon}(\bbx-\bbx_p(t)).
 %   \varphi_{\varepsilon}(\bbx) =& \; \frac{1}{(2\pi \varepsilon)^{d/2}}\exp\left(-\frac{|\bbx|^2}{2\varepsilon}\right).
\end{aligned}
\end{equation}
Using the initialization, computational domain, and cell centers $\{\bbx_i\}$ described at the beginning of \Cref{sec:Particle method}, the  $L^{p}$ and $L^{\infty}$ errors are defined as
\begin{equation*}
\|f^{N}_{\varepsilon} - f\|^p_{L^p} = \sum_{i}h^d|f^N_{\varepsilon}(\bbx_{i}) - f(\bbx_{i})|^p, \quad \|f^{N}_{\varepsilon} - f\|_{L^{\infty}} = \max_{i}|f^N_{\varepsilon}(\bbx_{i}) - f(\bbx_{i})|.
\end{equation*}
When choosing the regularization parameter $\varepsilon$, we are motivated by the success of the numerical examples presented in \cite{CCP19} and \cite{CHWW20}.  For all the examples presented in this paper, we choose $\varepsilon = 0.64h^{1.98}$.  Additionally, for each of the following examples, we track the time evolution of the conserved quantities.  Specifically, mass, momentum, and kinetic energy of the particle solution are given by 
\begin{equation}
    m(\bm X^n)=\sum_{p = 1}^N w_p, \quad P(\bm X^n) =\sum_{p=1}^N w_p\bbx^n_p, \quad \mbox{and} \quad K(\bm X^n) = \frac{1}{2}\sum_{p=1}^N w_p|\bbx^n_p|^2.
\end{equation}
We omit plots of the time evolution of mass, since the particle weights $\{w_p\}$ remain as constant. The energy $E_{A/L}^{\varepsilon}(\bm X^n)$ is defined using \eqref{Hreg} evaluated at $f^N$, given by
\begin{equation}
E^{\varepsilon}_A(\bm X^n) = \int_{\mathbb{R}^d}H\left(\sum_{p=1}^N w_p\varphi_{\varepsilon}(\bbx - \bbx_p^n)\right)\rd{\bbx} + \sum_{p=1}^Nw_pV(\bbx_p^n) + \frac{1}{2}\sum_{p,q=1}^Nw_qw_pW(\bbx_p^n - \bbx_q^n),
\end{equation}
where $H(f) = f\log f$ or $\frac{1}{m-1}f^m$, and
\begin{equation}
E^{\varepsilon}_L(\bm X^n) = \int_{\mathbb{R}^d}\sum_{p=1}^Nw_p\varphi_{\varepsilon}(\bbx - \bbx_p^n)\log{\left(\sum_{q=1}^Nw_q\varphi_{\varepsilon}(\bbx - \bbx_q^n)\right)}\rd{\bbx}.
\end{equation}
The integrals above are approximated by the midpoint rule using grid $\{\bbx_i\}$.

Scheme \eqref{eq: discAggDiff} can be written in terms of each particle 
\begin{equation}\label{eq:discaggdiff1}
    \frac{\bbx_p^{n+1}-\bbx_p^n}{\Delta t}=-\frac{1}{w_p}\overline{\nabla_{\bbx_p}E_A^{\varepsilon}}(\bm X^{n+1},\bm X^n),
\end{equation}
and the same can be done for scheme \eqref{eq: discLandau}
\begin{equation}\label{eq:discLand1}
   \frac{\bbx_p^{n+1}-\bbx_p^n}{\Delta t}\\ 
    = -\sum_{q=1}^N w_q A(\overline{\bbx_p^n}-\overline{\bbx_q^n}) \left(\frac{\overline{\nabla_{\bbx_p}E_L^{\varepsilon}}(\bm X^{n+1},\bm X^n)}{w_p}-\frac{\overline{\nabla_{\bbx_q}E_L^{\varepsilon}}(\bm X^{n+1},\bm X^n)}{w_q}\right),
\end{equation}
where $\overline{\bbx_p^n} =(\bbx_p^n+\bbx_p^{n+1})/2$.
As mentioned in \Cref{sec:Intro}, the following examples make use of the mean-value discrete gradient,
\begin{equation}\label{eq:AVF DG}
\overline{\nabla_{\bbx_p} E_{A/L}^{\varepsilon}}(\bm X^{n+1},\bm X^n):=\int_0^1 \nabla_{\bbx_p} E_{A/L}^{\varepsilon}\left(\bX^n+s(\bX^{n+1}-\bX^n)\right)\rd{s},
\end{equation}
and by \Cref{cor}, $\nabla_{\bbx_p}E^{\varepsilon}_{A/L}(\bm X^n) = w_p\nabla_{\bbx}\frac{\delta E^{\varepsilon}_{A/L}}{\delta f}(\bbx_p^n)$ and the integral in \eqref{eq:AVF DG} is approximated using a four point Gauss-Legendre quadrature.

The fixed point iteration, with a forward Euler step as the initial guess, is used to approximate the solution to the nonlinear system resulting from the discrete gradient discretization \eqref{eq:discaggdiff1} and \eqref{eq:discLand1}.  The stopping criterion is when the relative error from two consecutive iterations is less than $10^{-15}$.  For each example, the average number of iterations and the maximum amount of iterations required to meet the stopping criterion is listed.  It is important to note that all the examples except Example \ref{example:porous medium eq} have a maximum number of iterations that is much larger than the average amount of iterations, and that the maximum amount of iterations occurs at the initial time or shortly after. We compared the results of using a forward Euler initial guess to a fourth order Runge-Kutta initial guess but did not see much improvement on the number of iterations. We also explored other options for solving the nonlinear system.  The MATLAB function \lstinline{fsolve} is slow to converge even for a small amount of particles.  Newton's method is another option, but the formulation of the Jacobian for the Landau equation is both complicated and computationally expensive.  Finally, a Newton Krylov solver is another option and is used in \cite{ML11} when solving the collisionless Vlasov-Maxwell system. We defer the investigation of Newton type solvers to future work.

\subsection{Heat equation} 
\label{example:heat eq}
The first example is the 1D heat equation 
\begin{equation}
\partial_t f = \partial_{xx}f,
\end{equation}
which is the aggregation-diffusion equation \eqref{eq:GFP} with 
\begin{equation*}
    H(f) = f\log{f}, \quad V(x) = 0, \quad W(x) = 0.
\end{equation*}
The analytical solution corresponding to a point source is 
\begin{equation}\label{eq:heat equation solution}
    \Phi(t,x) = (4\pi t)^{-\frac{1}{2}}\exp{\left(-\frac{x^2}{4t} \right)}.
\end{equation}
We consider $t \in [2,3]$ with the initial condition $f(2,x) = \Phi(2,x)$ and a computational domain of $[-15,15]$. The particle solution using $\Delta t = 0.01$ and $M =60$, $70$, $80$, $90$, and $100$ is compared to the analytical solution \eqref{eq:heat equation solution}.  The left plot of Figure \ref{fig:heat eq figures} shows the decay of energy of the particle solution.  The right plot of Figure \ref{fig:heat eq figures} is a log-log plot of the $L^1,L^2,$ and $L^{\infty}$ errors vs the cell size $h$ at the final time $t = 3$ and shows that the particle method is approximately second order accurate.  Figure \ref{fig:heat eq num iter} shows how many iterations the fixed point method needs to meet the stopping criterion along time.  Table \ref{table:heat eq iter} shows the mean and maximum number of iterations for different $M$. 

\begin{figure}[!ht]
    \centering
    \includegraphics[width = .49\textwidth]{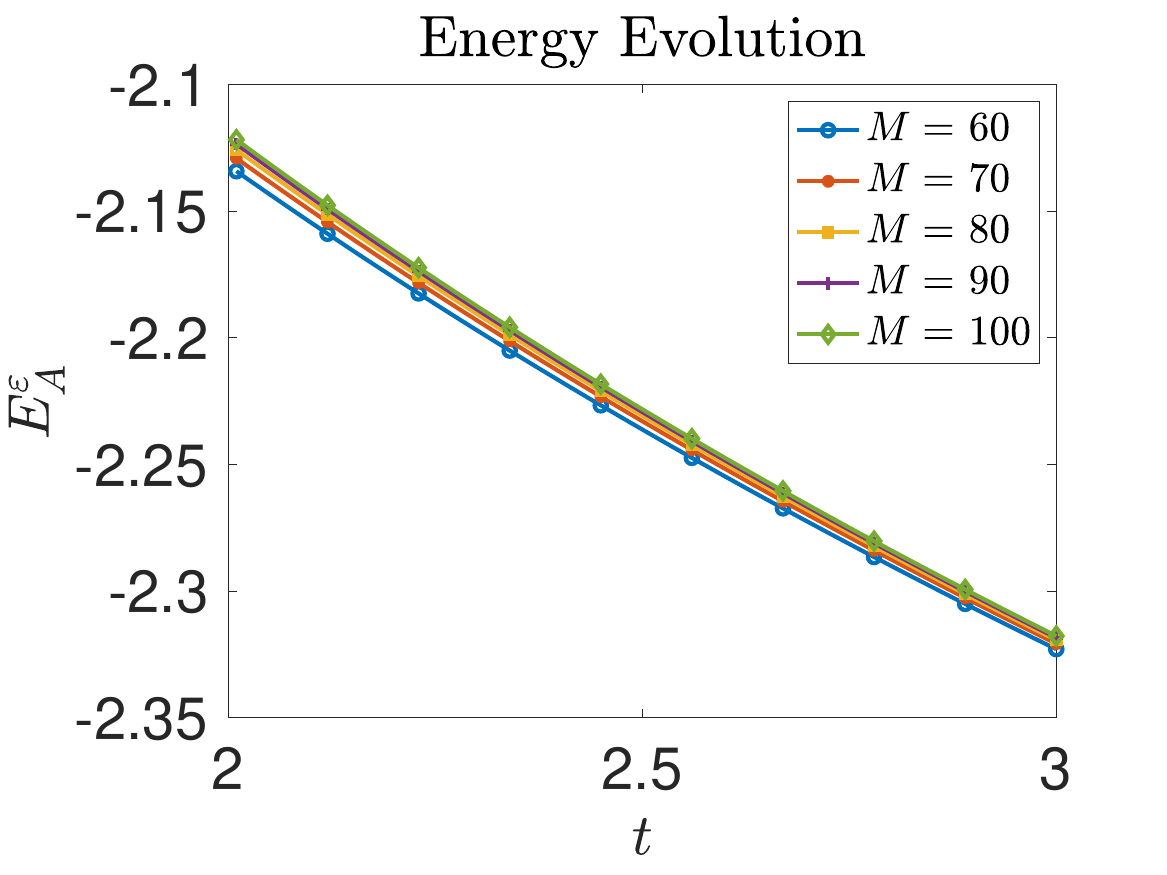}
    \includegraphics[width = .49\textwidth]{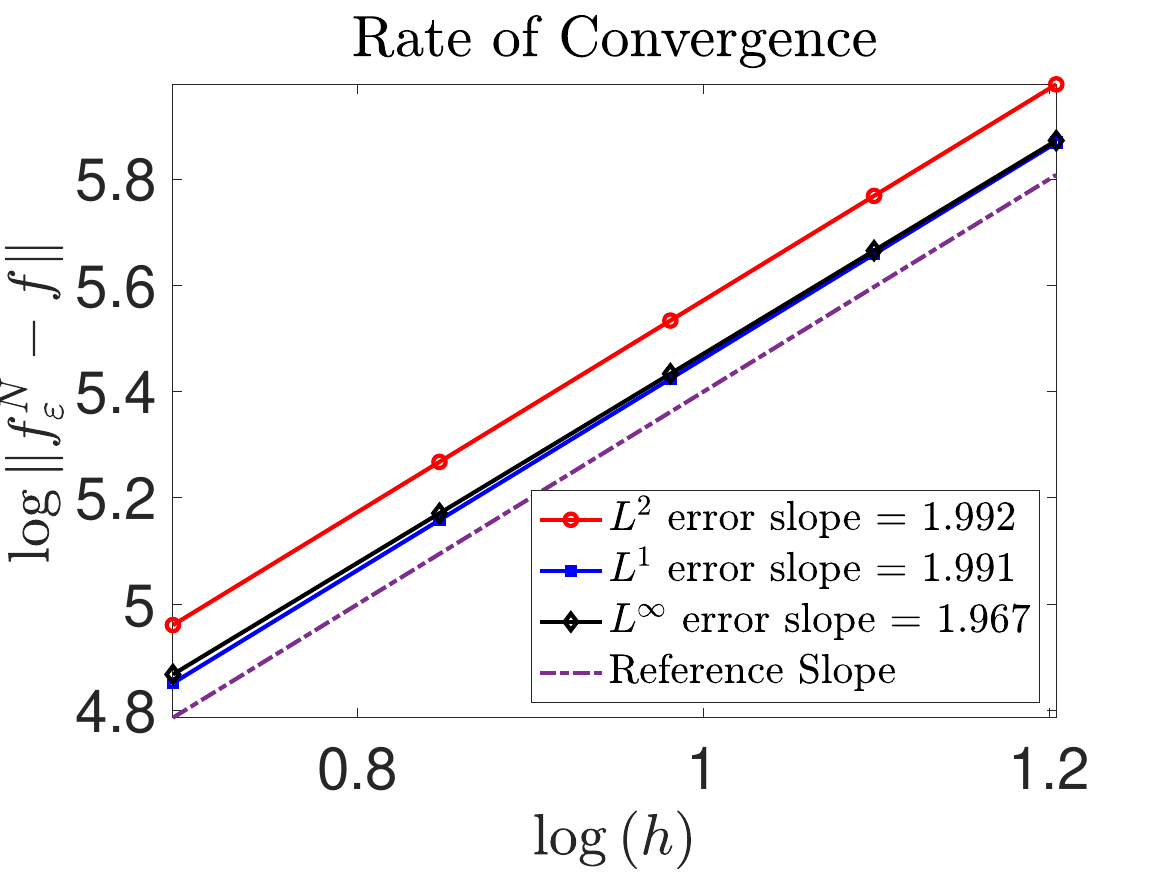}
    \caption{Example \ref{example:heat eq}: Left: Time evolution of the energy of the particle solution for different values of $M$. 
    Right: $L^1, L^2$ and $L^{\infty}$ errors vs cell size $h$.}
    \label{fig:heat eq figures}
\end{figure}

\begin{minipage}{.95\linewidth}
  \begin{minipage}[b]{.45\linewidth}
    \centering
\includegraphics[width=.99\textwidth]{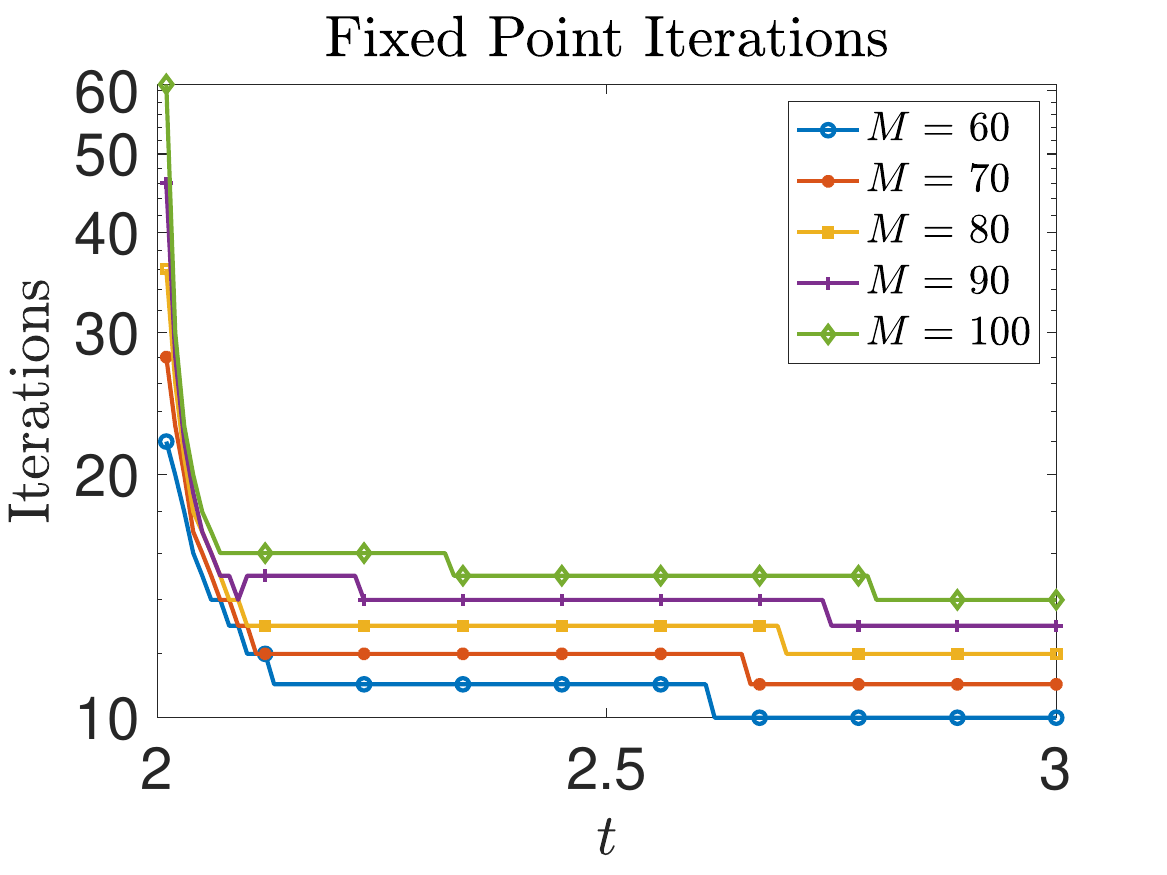}
    \captionof{figure}{Example \ref{example:heat eq}: Number of fixed point iterations required to meet the stoping criterion along time.}%
  \label{fig:heat eq num iter}% \caption{Figure caption}
  \end{minipage} ~~
  \begin{minipage}[b]{.5\linewidth}
    \centering
    \begin{tabular}{|c|c|c|} \hline
  $M$ & mean number & max number\\ 
      & of iterations  & of iterations\\
      \hline
  $60$ & 11.10 & 22 \\
  \hline
  $70$ & 12.18 & 28 \\ 
  \hline
  $80$ & 13.29 & 36 \\
  \hline
  $90$ & 14.53 & 46 \\
  \hline
  $100$ & 15.84 & 61\\
  \hline
  \end{tabular}
    \captionof{table}{Example \ref{example:heat eq}: The average number and maximum number of fixed point iterations required to meet the stopping criterion for different values of $M$.  The time step is $\Delta t = 0.01$ for all values of $M$.}% 
  \label{table:heat eq iter}
  \end{minipage}
\end{minipage}

\subsection{Porous medium equation}
\label{example:porous medium eq}
This example concerns the 1D porous medium equation 
\begin{equation}\label{eq: porous medium eq}
\partial_t f = \partial_{xx}(f^m), \quad m>1,   
\end{equation}
which is the aggregation-diffusion equation \eqref{eq:GFP} with 
\begin{equation*}
    H(f)=\frac{1}{m-1}f^m, \quad V(x) = 0,\quad W(x) = 0.
\end{equation*}
We compare our particle solution with the analytical Barenblatt solution \cite{B52}, 
\begin{equation}\label{eq:Porous Medium eq solution}
\Psi(t,x) = \frac{1}{t^{\alpha}}\psi\left(\frac{|x|}{t^{\alpha}} \right),
\end{equation}
where $\psi(\xi) = \left(K - \kappa \xi^2\right)_{+}^{1/(m-1)}$ for $\alpha = 1/(m+1)$, $\gamma = 1/(m-1) + 1/2$, $\kappa = \alpha(m-1)/(2m)$ and $(\cdot)_{+} = \max{\left\{\cdot,0\right\}}$.  The normalization constant $K > 0$ is related to the total mass $ a(m)K^{\gamma}$, where
\begin{equation*}
    a(m) = \left(\frac{2\pi m}{\alpha(m-1)}\right)^{\frac{1}{2}}\frac{\Gamma\left(\frac{m}{m-1}\right)}{\Gamma{\left(\frac{m}{m-1}+\frac{1}{2}\right)}},
\end{equation*}
and $\Gamma$ is the Gamma function. 

We consider the porous medium equation \eqref{eq: porous medium eq} with $m = 3/2$ for $t \in [2,3]$ with the initial condition $f(2,x) = \Psi(2,x)$ and $K=1$. The computational domain is $[-8,8]$.
The particle solution using $\Delta t = 0.01$ and $M = 60, 70, 80, 90,$ and $100$ is compared to the analytical solution \eqref{eq:Porous Medium eq solution}. The left plot of Figure \ref{fig:porous medium} shows the decay of energy of the particle solution.  The right plot of Figure \ref{fig:porous medium} is a log-log plot of the $L^1,L^2,$ and $L^{\infty}$ errors vs the cell size $h$ at the final time $t = 3$ and shows that the particle method is approximately second order accurate. Figure \ref{fig:porous medium num iter} shows how many iterations the fixed point method needs to meet the stopping criterion along time. Table \ref{table:porous medium iter} shows the mean and maximum number of iterations for different values of $M$.

\begin{figure}[!ht]
    \centering
    \includegraphics[width = .49\textwidth]{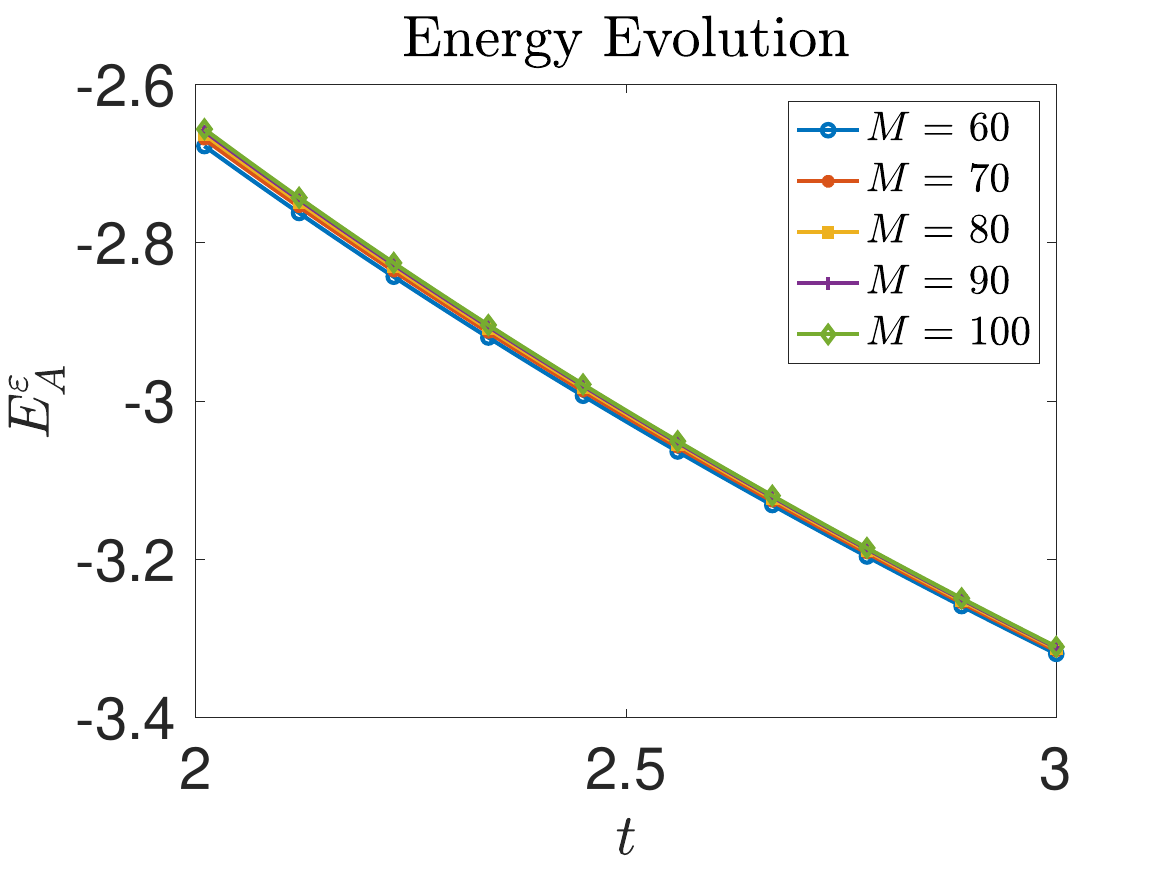}
    \includegraphics[width = .49\textwidth]{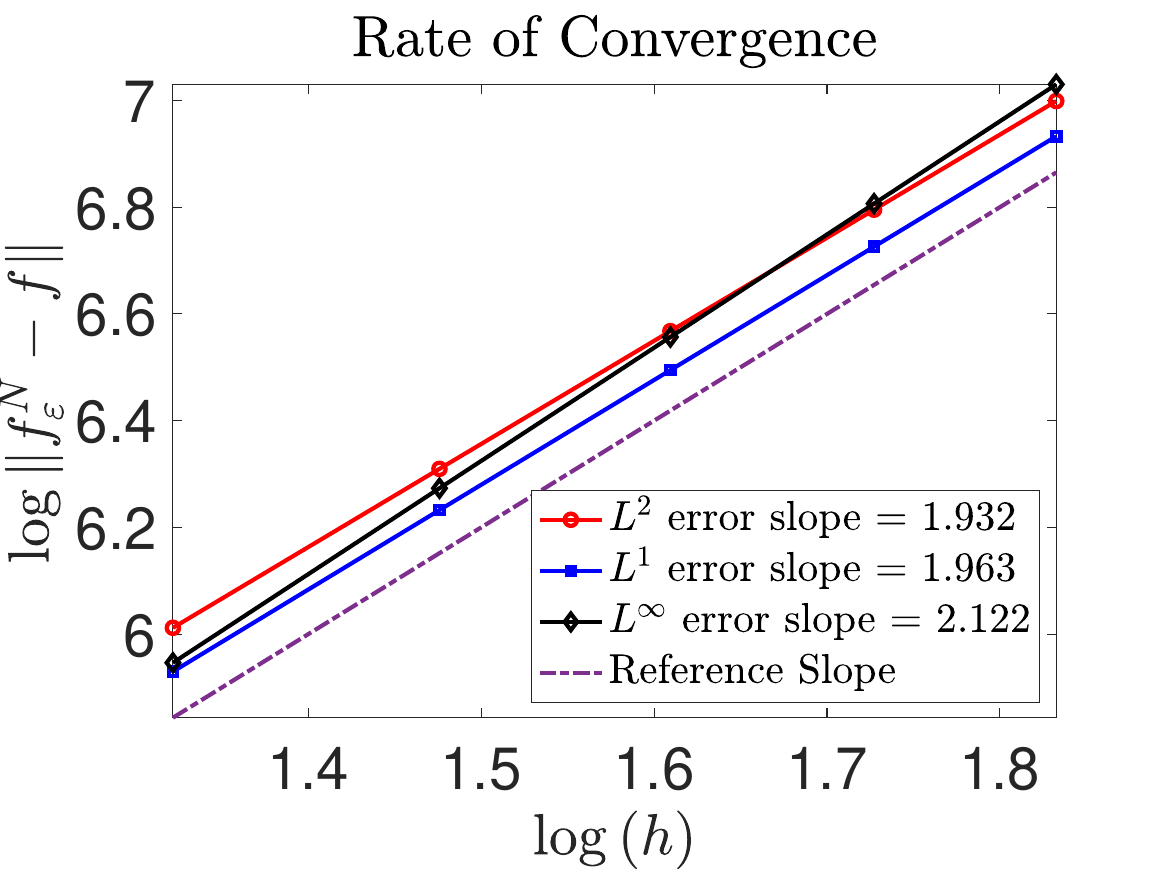}
    \caption{Example \ref{example:porous medium eq}. Left: Time evolution of the energy of the particle solution for different values of $M$. 
 Right: $L^1, L^2$ and $L^{\infty}$ errors vs cell size $h$.}
 \label{fig:porous medium}
\end{figure}

\begin{minipage}{0.95\linewidth}
  \begin{minipage}[b]{.45\linewidth}
    \centering
    \includegraphics[width=.99\textwidth]{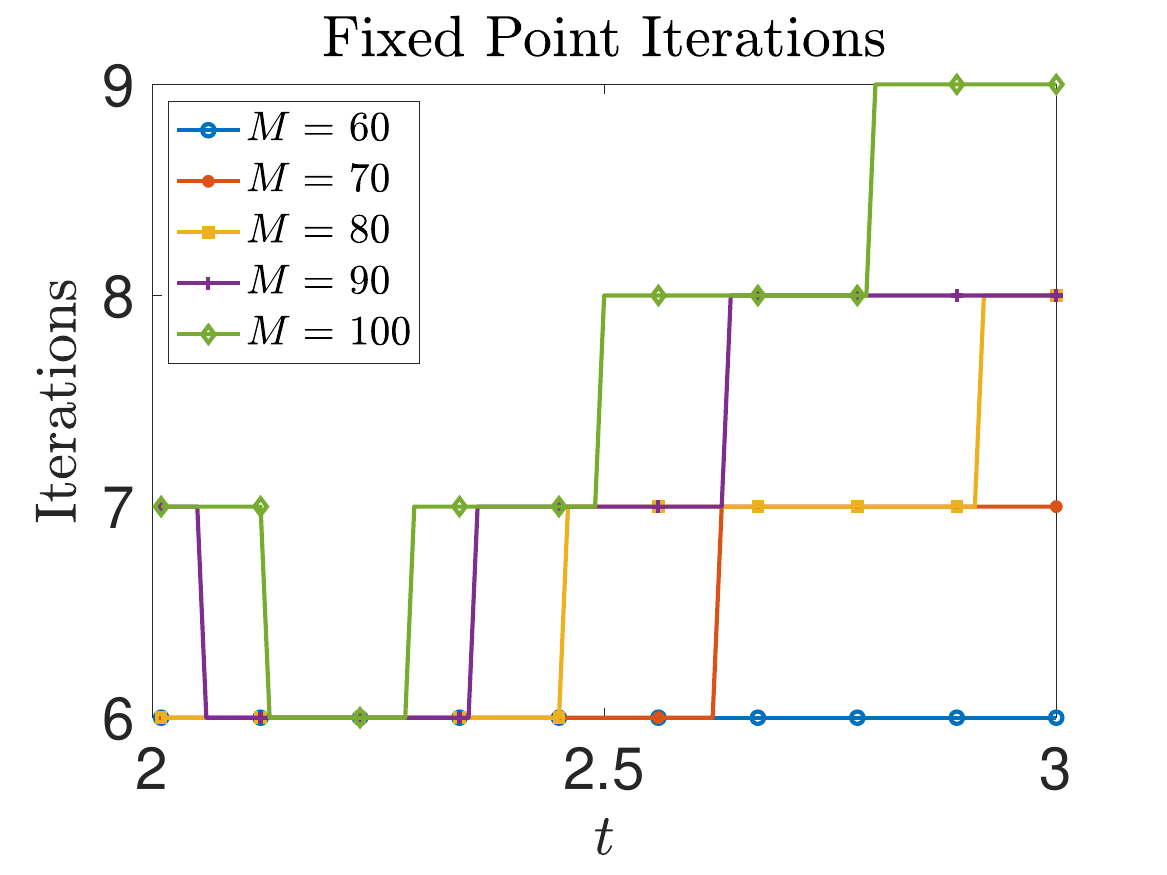}
    \captionof{figure}{Example \ref{example:porous medium eq}: Number of fixed point iterations required to meet the stoping criterion along time.}
 \label{fig:porous medium num iter}% \caption{Figure caption}
  \end{minipage} ~~
  \begin{minipage}[b]{.5\linewidth}
    \centering
    \begin{tabular}{|c|c|c|} \hline
  $M$ & mean number & max number\\ 
      & of iterations  & of iterations\\
      \hline
  $60$ & 6.00 & 6 \\
  \hline
  $70$ & 6.38 & 7 \\ 
  \hline
  $80$ & 6.64 & 8 \\
  \hline
  $90$ & 7.07 & 8 \\
  \hline
  $100$ & 7.56 & 9\\
  \hline
  \end{tabular}
    \captionof{table}{Example \ref{example:porous medium eq}: The average number and maximum number of fixed point iterations required to meet the stopping criterion for different values of $M$.  The time step is $\Delta t = 0.01$ for all values of $M$.}%
  \label{table:porous medium iter}
  \end{minipage}
\end{minipage}

\subsection{Linear Fokker-Planck equation}
\label{example: linear fokker-planck eq}
This example concerns the 1D linear Fokker Planck equation
\begin{equation}\label{eq:1D Linear Fokker Planck}
       \partial_t f=\partial_{xx}f +\partial_x(xf),
\end{equation}
which is the aggregation-diffusion equation \eqref{eq:GFP} with 
\begin{equation*}
    H(f) = f\log{f},\quad V(x) = \frac{x^2}{2}, \quad W(x) = 0.
\end{equation*}
We compare our particle solution to the analytical solution 
\begin{equation}\label{eq:1D Linear Fokker Planck solution}
        \Upsilon(t,x) = \left(2\pi\left(1-e^{-2t}\right)\right)^{-1/2}\exp\left(-\frac{x^2}{2\left(1-e^{-2t} \right)}\right).
\end{equation}
We consider $t \in [0.5,1]$ with the initial condition $f(0.5,x) = \Upsilon(0.5,x)$. The computational domain is $[-5,5]$.  The particle solution using $\Delta t = 0.01/10$ and $M = 60, 70, 80, 90,$ and $100$ is compared to the analytical solution \eqref{eq:1D Linear Fokker Planck solution}.  

\begin{figure}[!ht]
    \centering
    \includegraphics[width = .49\textwidth]{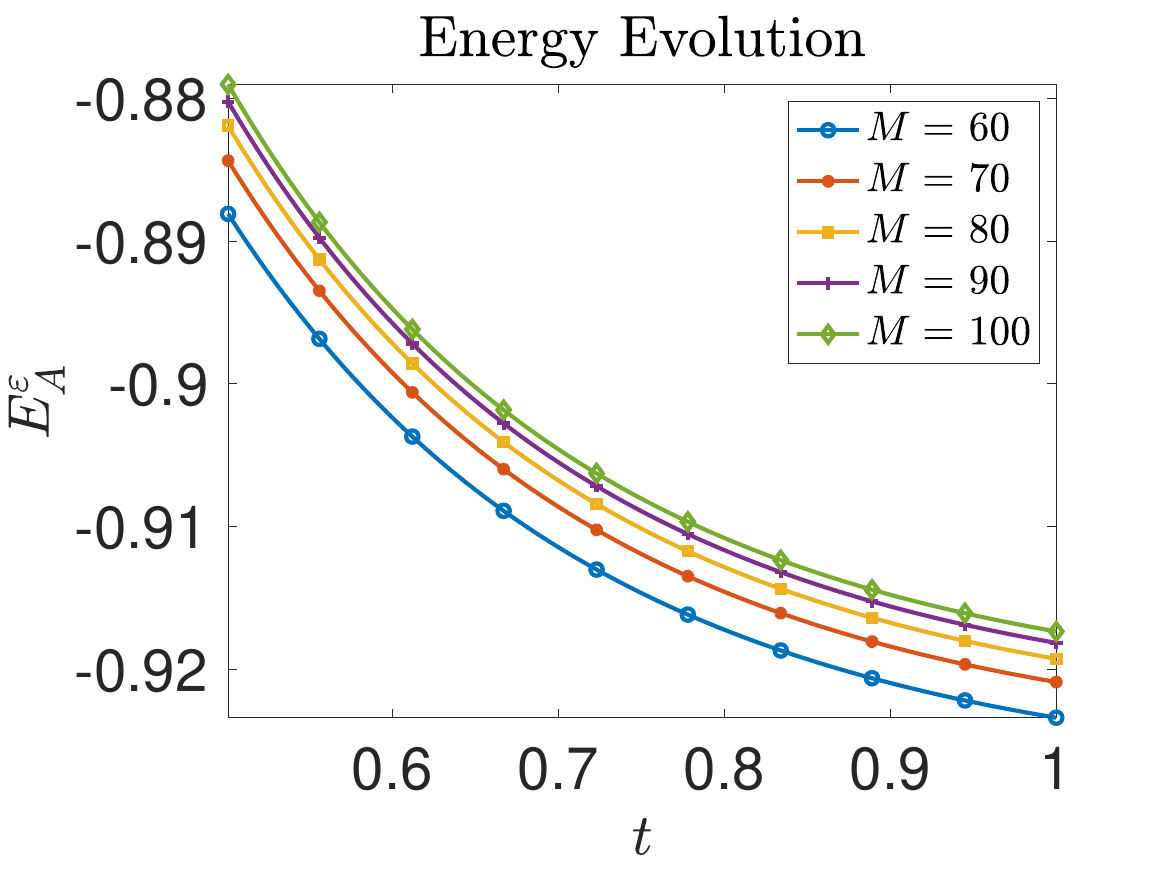}
    \includegraphics[width = .49\textwidth]{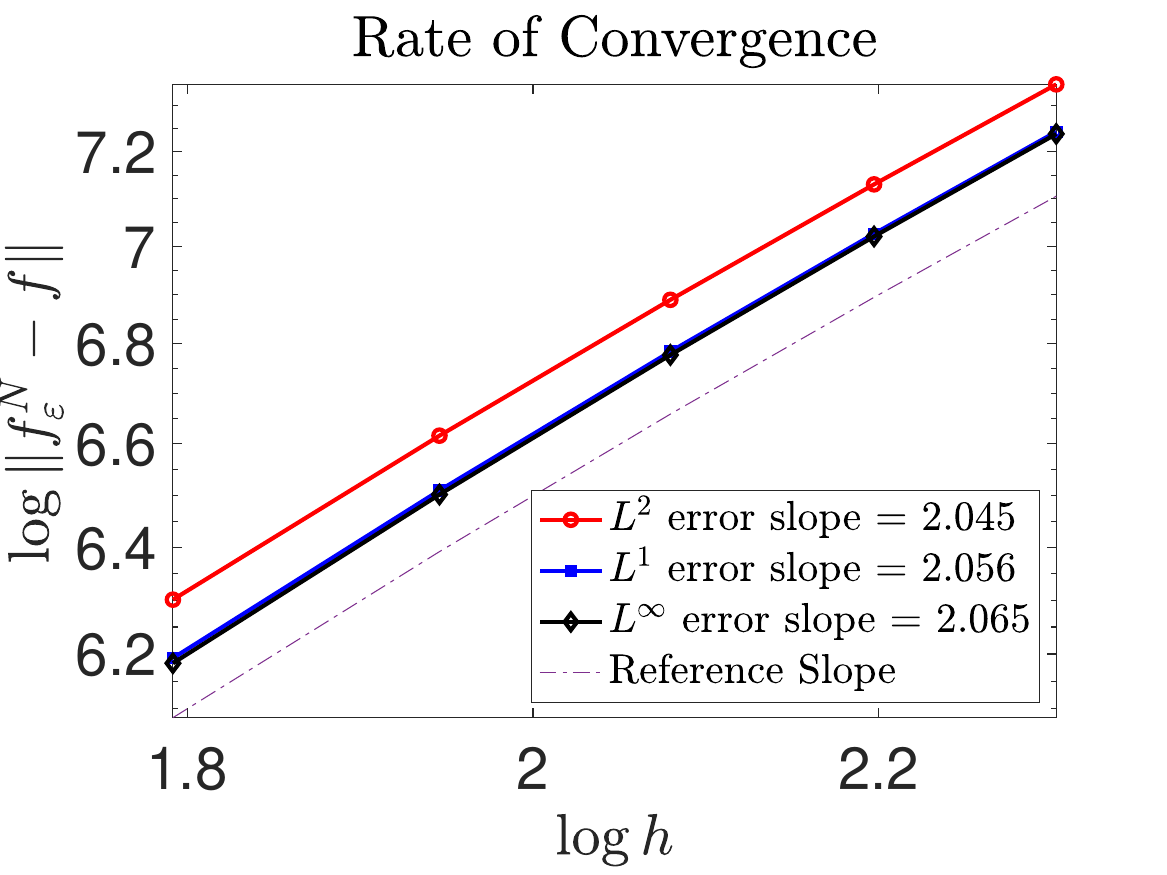}
    \caption{Example \ref{example: linear fokker-planck eq}: Left: Time evolution of the energy of the particle solution for different values of $M$. 
 Right: $L^1, L^2$ and $L^{\infty}$ errors vs cell size $h$.}
 \label{fig:1D linear fokker planck}
\end{figure}

The left plot of Figure \ref{fig:1D linear fokker planck} shows the decay of energy of the particle solution.   The right plot of Figure \ref{fig:1D linear fokker planck} is a log-log plot of the $L^1,L^2,$ and $L^{\infty}$ errors vs the cell size $h$ at the final time $t = 1$ and shows that the particle method is approximately second order accurate. Figure \ref{fig:fokker planck linear num iter} shows how many iterations the fixed point method needs to meet the stopping criterion along time.  Table \ref{table:linear fokker planck iter} shows the mean and maximum number of iterations for different values of $M$. \\

\begin{minipage}{0.95\linewidth}
  \begin{minipage}[b]{.45\linewidth}
    \centering
    \includegraphics[width=.99\textwidth]{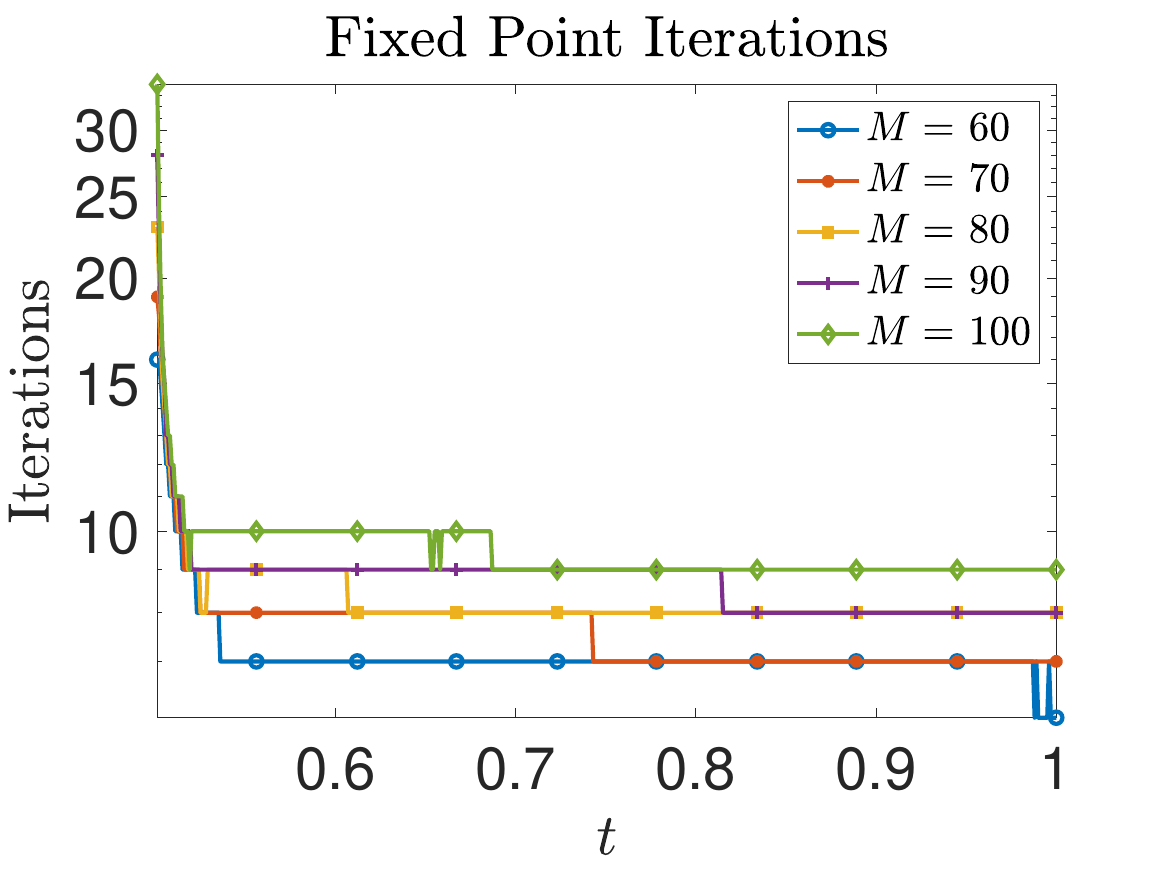}
    \captionof{figure}{Example \ref{example: linear fokker-planck eq}: Number of fixed point iterations required to meet the stoping criterion along time.}%
  \label{fig:fokker planck linear num iter}
  \end{minipage} ~~
  \begin{minipage}[b]{.5\linewidth}
    \centering
    \begin{tabular}{|c|c|c|} \hline
  $M$ & mean number & max number\\ 
      & of iterations  & of iterations\\
      \hline
  $60$ & 7.18  & 16 \\
  \hline
  $70$ & 7.65 & 19 \\ 
  \hline
  $80$ & 8.34 & 23 \\
  \hline
  $90$ & 8.79 & 28 \\
  \hline
  $100$ & 9.52 & 34\\
  \hline
  \end{tabular}
    \captionof{table}{Example \ref{example: linear fokker-planck eq}: The average number and maximum number of fixed point iterations required to meet the stopping criterion for different values of $M$.  The time step is $\Delta t = 0.01/10$ for all values of $M$.}%
  \label{table:linear fokker planck iter}
  \end{minipage}
\end{minipage}

\subsection{Non-local Fokker-Planck equation}
\label{example: non-local fokker-planck eq}
The Fokker Planck equation \eqref{eq:1D Linear Fokker Planck} can be written equivalently as the aggregation-diffusion equation \eqref{eq:GFP} with a nonzero interaction potential rather than a nonzero external potential
\begin{equation*}
    H(f) = f\log{f},\quad V(x) = 0, \quad W(x) = \frac{x^2}{2}.
\end{equation*}
We use this example to showcase how the method handles the convolution term $W*f$. The setup is exactly the same as the previous example. The results are shown in Figures \ref{fig:1D_non_local_fokker_planck}, \ref{fig:fokker planck non local num iter} and Table \ref{table:non local fokker planck iter}.

\begin{figure}[!ht]
    \centering
    \includegraphics[width = .49\textwidth]{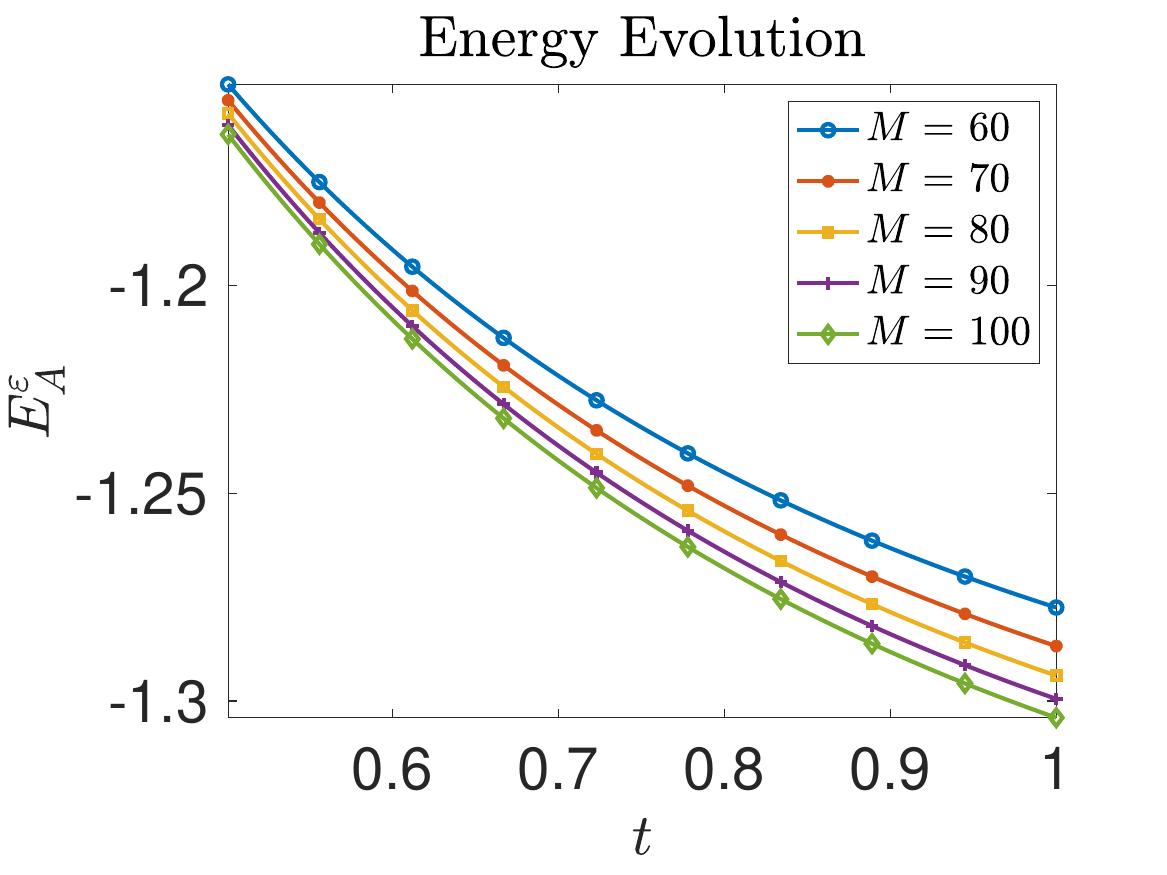}
    \includegraphics[width = .49\textwidth]{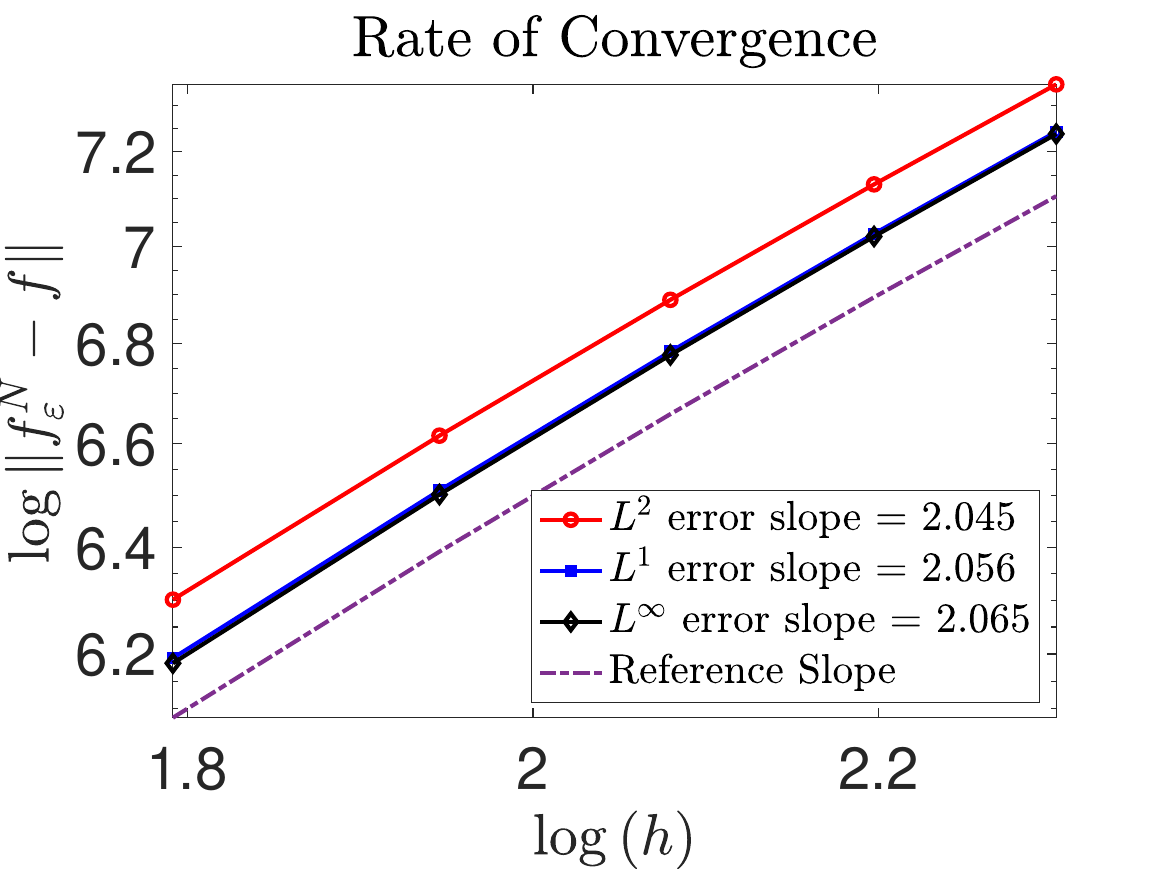}
    \caption{Example \ref{example: non-local fokker-planck eq}: Left: Time evolution of the energy of the particle solution for different values of $M$. 
 Right: $L^1, L^2$ and $L^{\infty}$ errors vs cell size $h$.}
 \label{fig:1D_non_local_fokker_planck}
\end{figure}

\begin{minipage}{0.95\linewidth}
  \begin{minipage}[b]{.45\linewidth}
    \centering
    \includegraphics[width=.99\textwidth]{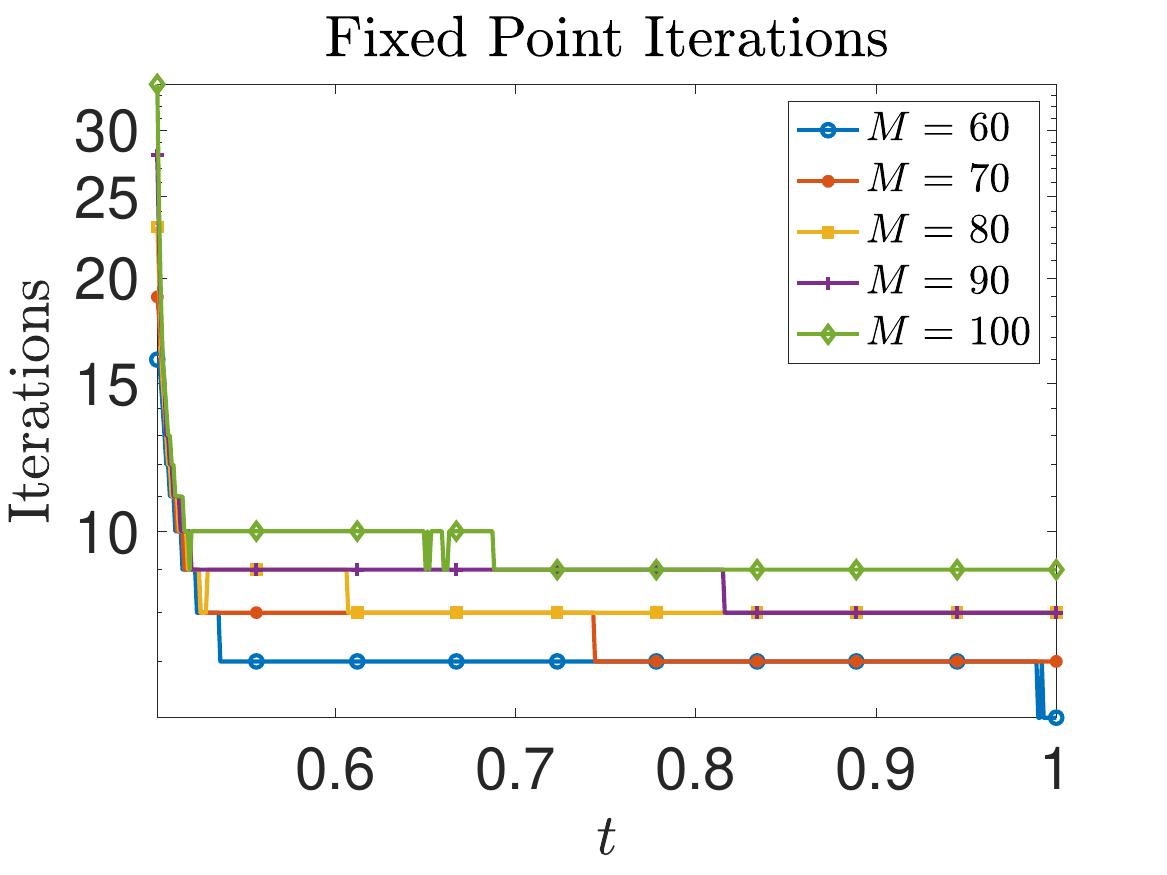}
    \captionof{figure}{Example \ref{example: non-local fokker-planck eq}: Number of fixed point iterations required to meet the stoping criterion along time.}%
  \label{fig:fokker planck non local num iter}
  \end{minipage} ~~
  \begin{minipage}[b]{.5\linewidth}
    \centering
    \begin{tabular}{|c|c|c|} \hline
  $M$ & mean number & max number\\ 
      & of iterations  & of iterations\\
      \hline
  $60$ & 7.18  & 16 \\
  \hline
  $70$ & 7.65 & 19 \\ 
  \hline
  $80$ & 8.34 & 23 \\
  \hline
  $90$ & 8.79 & 28 \\
  \hline
  $100$ & 9.52 & 34\\
  \hline
  \end{tabular}
    \captionof{table}{Example \ref{example: non-local fokker-planck eq}: The average number and maximum number of fixed point iterations required to meet the stopping criterion for different values of $M$.  The time step is $\Delta t = 0.01/10$ for all values of $M$.}%
  \label{table:non local fokker planck iter}
  \end{minipage}
\end{minipage}

\subsection{Landau equation with Maxwell kernel}
\label{example: Landau eq Maxwellian}
The remaining two examples correspond to the Landau equation \eqref{eq:Landau1} in 2D.  

We first consider the Maxwell collision kernel $a_{ij}(\bbx)=\frac{1}{16}(|\bbx|^2\delta_{ij}-x_ix_j)$.
It is in this case an analytical solution is known, and 
we refer to appendix A of \cite{CHWW20} for the derivation of the BKW solution, which is given by 
\begin{equation}
    \Phi(t,\bbx) = \frac{1}{2\pi R}\exp{\left(-\frac{|\bbx|^2}{2R}\right)\left(\frac{2R-1}{R} + \frac{1-R}{2R^2}|\bbx|^2 \right)}, \quad R = 1-\exp{(-t/8)/2}.
\end{equation}
We choose $t \in [0,5]$ with the initial condition $f(0,\bbx) = \Phi(0,\bbx)$. The computational domain is $[-4,4]^2$.  The particle solution is computed using a time step of $\Delta t = 0.01/8$ and $M = 40,45,50,55,60$. The left plot of Figure \ref{fig:BKW1} shows the evolution of the difference between the kinetic energy at time $t$ and its initial value. The right plot of Figure \ref{fig:BKW1} shows the decay of energy/entropy of the particle solution.  

\begin{figure}[!ht]
    \centering
    \includegraphics[width = .49\textwidth]{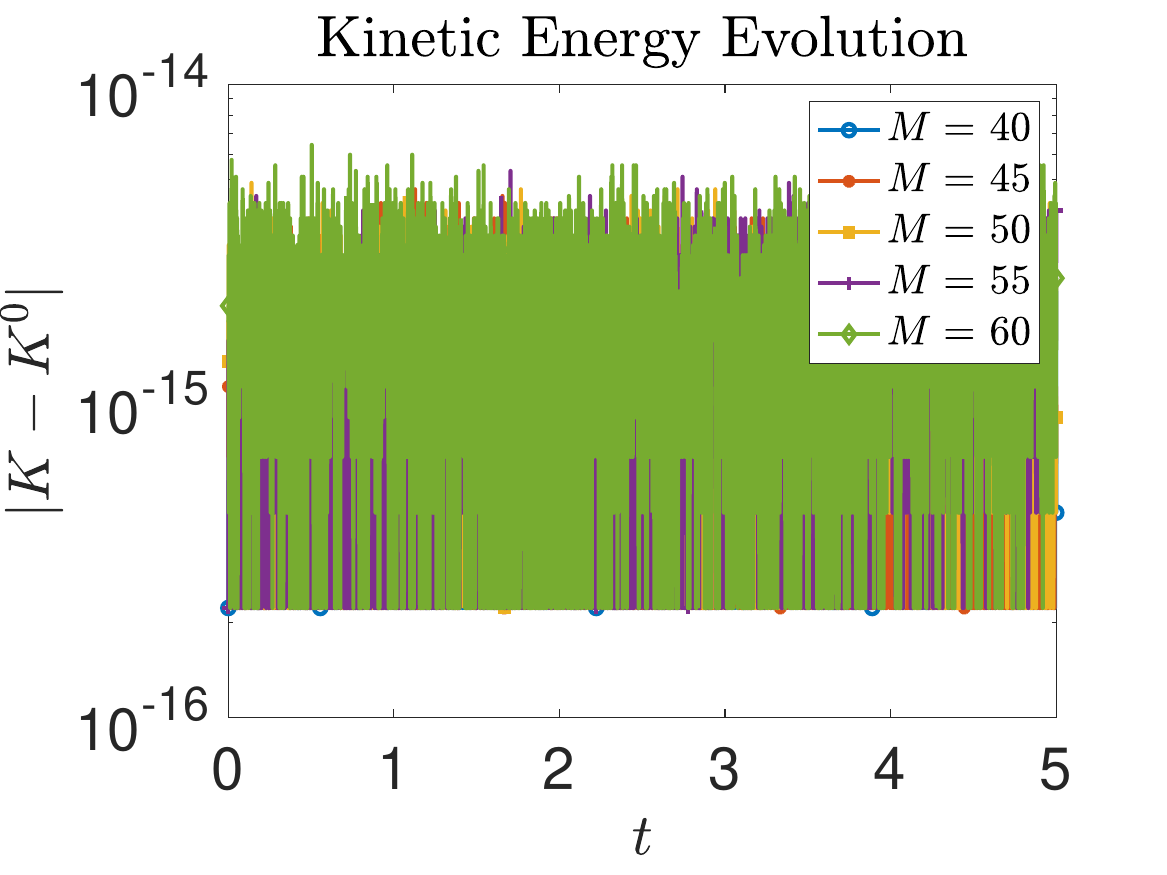}
    \includegraphics[width = .49\textwidth]{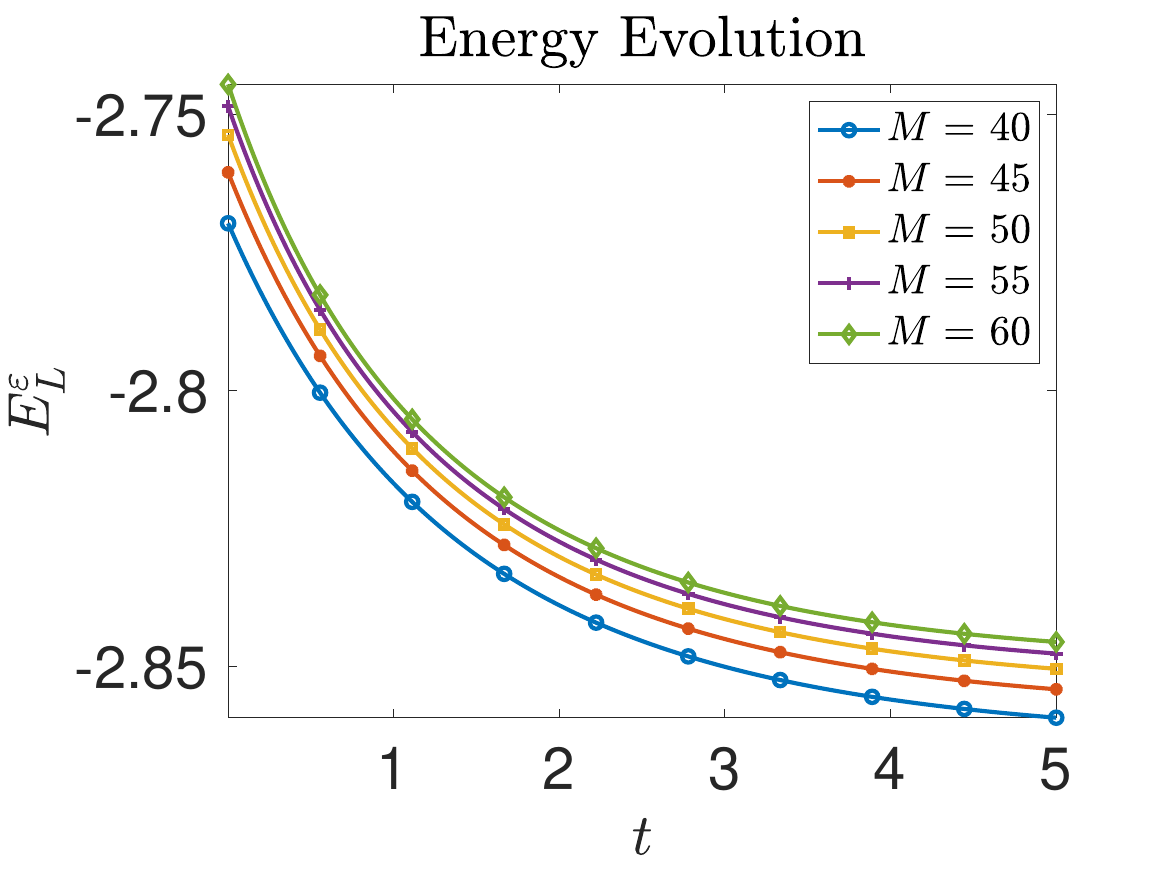}
    \caption{Example \ref{example: Landau eq Maxwellian}: Left: Time evolution of the difference between the kinetic energy at time $t$ and its initial value, for different values of $M$.  Right: Time evolution of the energy/entropy for different values of $M$.}
    \label{fig:BKW1}
\end{figure}

Figure \ref{fig:Landau Maxwellian num iter} shows the number of iterations the fixed point method needs to meet the stopping criterion along time.  Table \ref{table:Landau Maxwellian iter} shows the mean and maximum number of iterations for different values of $M$.   Figure \ref{fig:Landau Maxwellian ROC} is the $L^1,L^2,$ and $L^{\infty}$ errors vs the cell size $h$ at the final time $t = 5$ and shows that the particle method is approximately second order accurate. \\

\begin{minipage}{0.95\linewidth}
  \begin{minipage}[!b]{.45\linewidth}
    \centering
    \includegraphics[width=.99\textwidth]{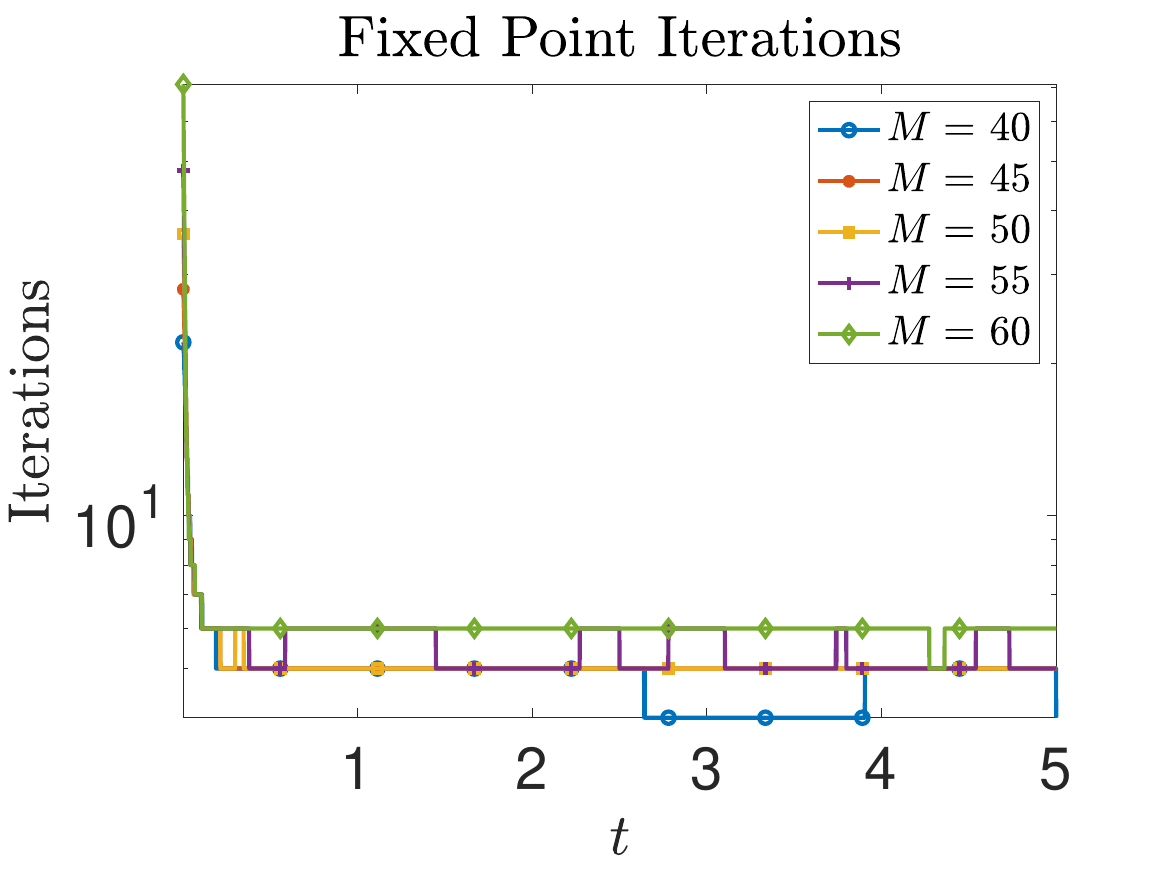}
    \captionof{figure}{Example \ref{example: Landau eq Maxwellian}: Number of fixed point iterations required to meet the stopping criterion along time.}%
  \label{fig:Landau Maxwellian num iter}
  \end{minipage} ~~
  \begin{minipage}[!b]{.5\linewidth}
    \centering
    \begin{tabular}{|c|c|c|}\hline
        $M$ & mean number & max number\\ 
          & of iterations  & of iterations\\
          \hline
      $40$ & 4.88  & 22 \\
      \hline
      $45$ & 5.15 & 28 \\ 
      \hline
      $50$ & 5.17 & 36 \\
      \hline
      $55$ & 5.54 & 48 \\
      \hline
      $60$ & 6.14 & 71\\
      \hline
    \end{tabular}
    \captionof{table}{Example \ref{example: Landau eq Maxwellian}: The average number and maximum number of fixed point iterations required to meet the stopping criterion for different values of $M$.  The time step is $\Delta t = 0.01/8$ for all values of $M$.}%
  \label{table:Landau Maxwellian iter}%
  \end{minipage}
\end{minipage}

\begin{figure}[!ht]
    \centering 
    \includegraphics[width = .49\textwidth]{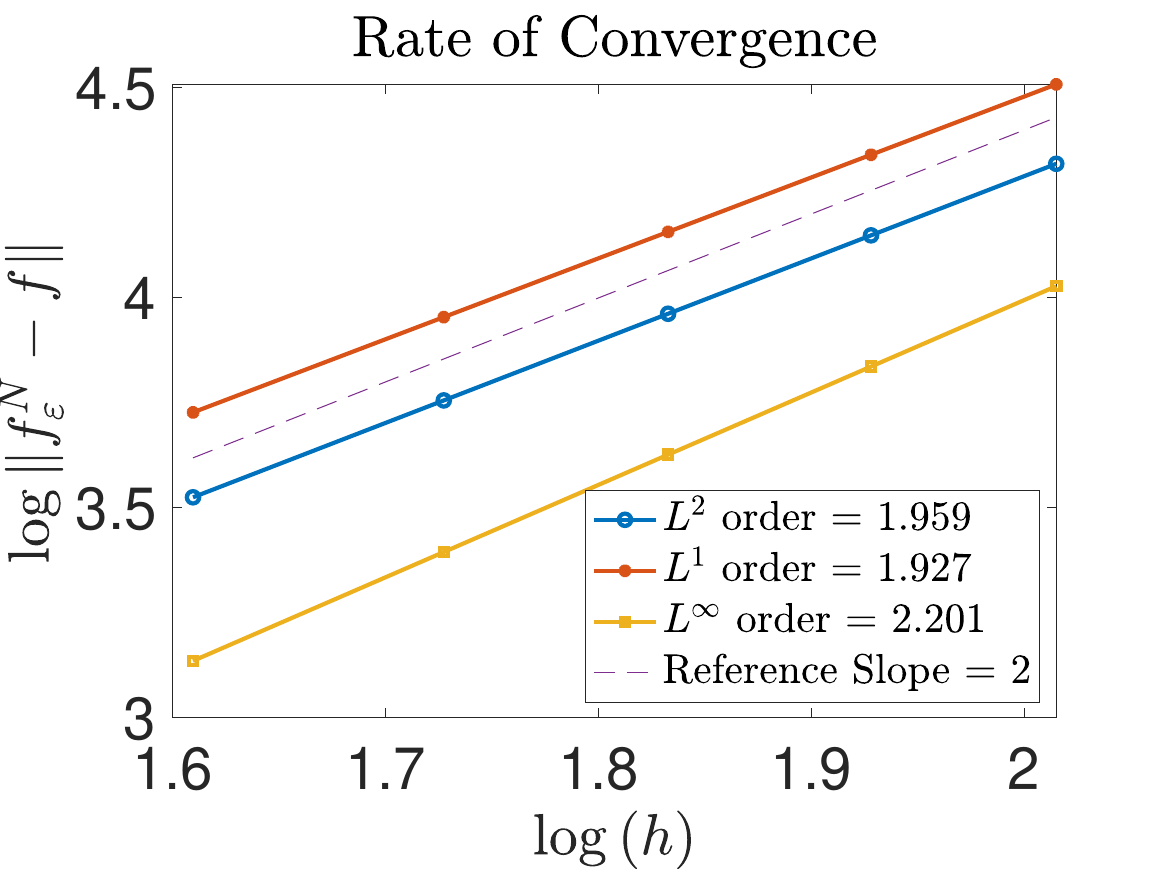}
    \caption{Example \ref{example: Landau eq Maxwellian}: $L^1, L^2$ and $L^{\infty}$ errors vs cell size $h$.}
    \label{fig:Landau Maxwellian ROC}
\end{figure}

\subsection{Landau equation with Coulomb kernel}
\label{example: Landau eq Coulomb}
The last example is the Landau equation with the Coulomb collision kernel $a_{ij}(\bbx)=\frac{1}{16}|\bbx|^{-3}(|\bbx|^2\delta_{ij}-x_ix_j)$.
We consider $t \in [0,20]$ with the initial condition
\begin{align*}
    &f(0,\bbx) = \frac{\pi}{4}\left(\exp{\left(-\frac{(\bbx - \bm{u}_1)^2}{2}\right)} + \exp{\left(-\frac{(\bbx-\bm{u}_2)^2}{2}\right)}\right),\\
    &\bm{u}_1 = (-2,1)^T, \quad \bm{u}_2 = (0,-1)^T.
\end{align*}
The computational domain is $ [-10,10]^2$.
The Coulomb case is the physically relevant case but there is no analytical solution to compare with. Therefore, we mainly demonstrate the conservation and dissipation properties of the proposed method. 

The particle solution is computed using a time step of $\Delta t = 0.1/2$ and $M = 40,45,50,55,60$. The plots in Figure  \ref{fig:Coulomb momentum} show the time evolution of the difference between the momentum at time $t$ and their initial values, in both the $x$ and $y$ directions.  The plot on the left of Figure \ref{fig:Coulomb Energy and Entropy} shows the difference between kinetic energy at time $t$ and its initial value, and the plot on the right shows the time evolution of the energy/entropy. As expected, our method is able to dissipate the energy and conserve the momentum and kinetic energy at the fully discrete level.

\begin{figure}[!ht]
    \centering
    \includegraphics[width = .49\textwidth]{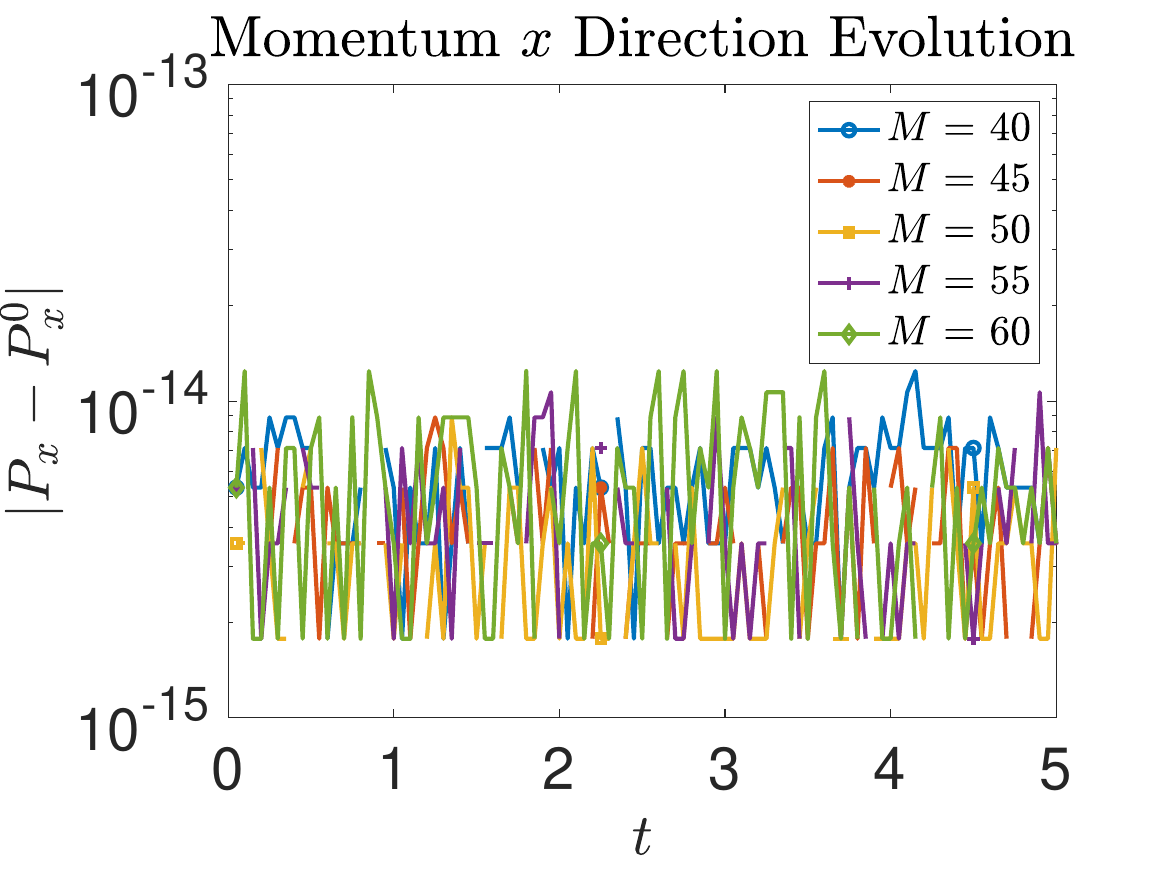}
    \includegraphics[width = .49\textwidth]{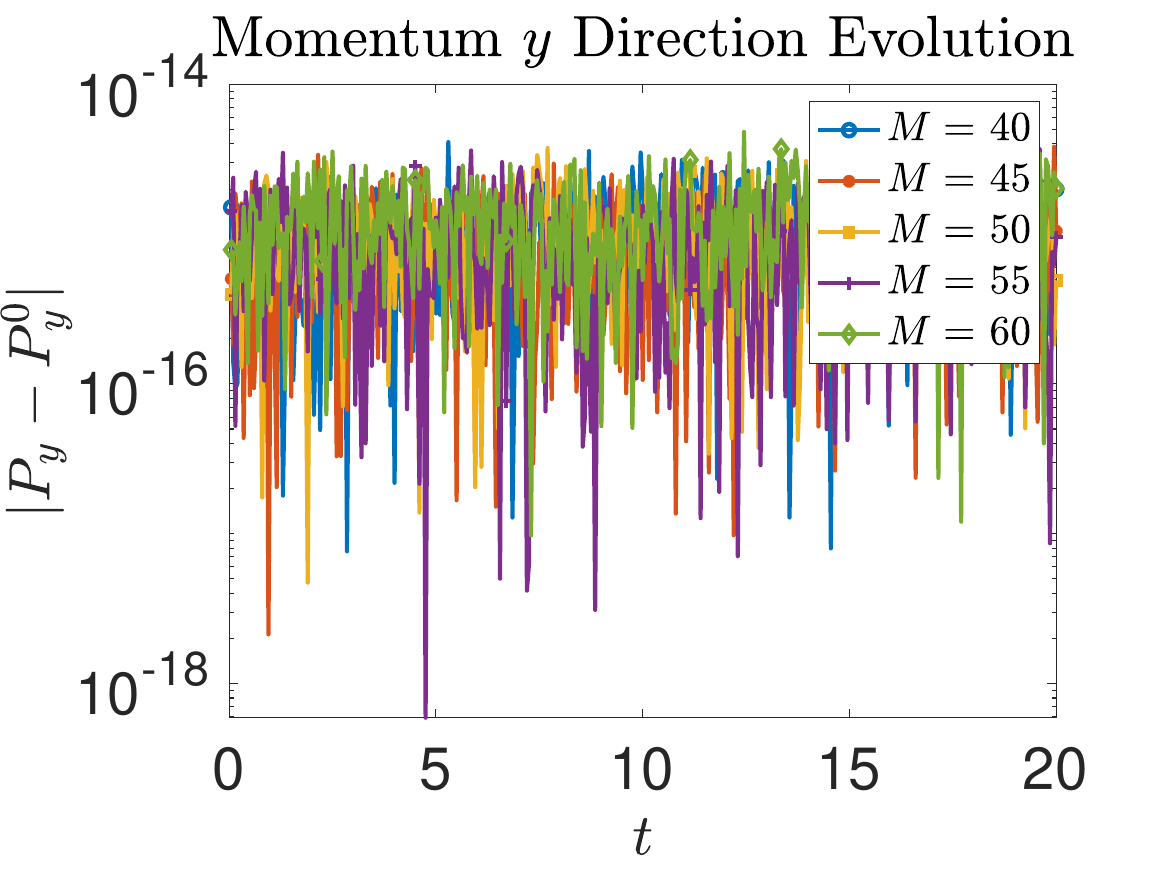}
    \caption{Example \ref{example: Landau eq Coulomb}: Time evolution of the difference between the momentum at time $t$ and their initial values.}
    \label{fig:Coulomb momentum}
\end{figure}

\begin{figure}[!ht]
    \centering
    \includegraphics[width = .49\textwidth]{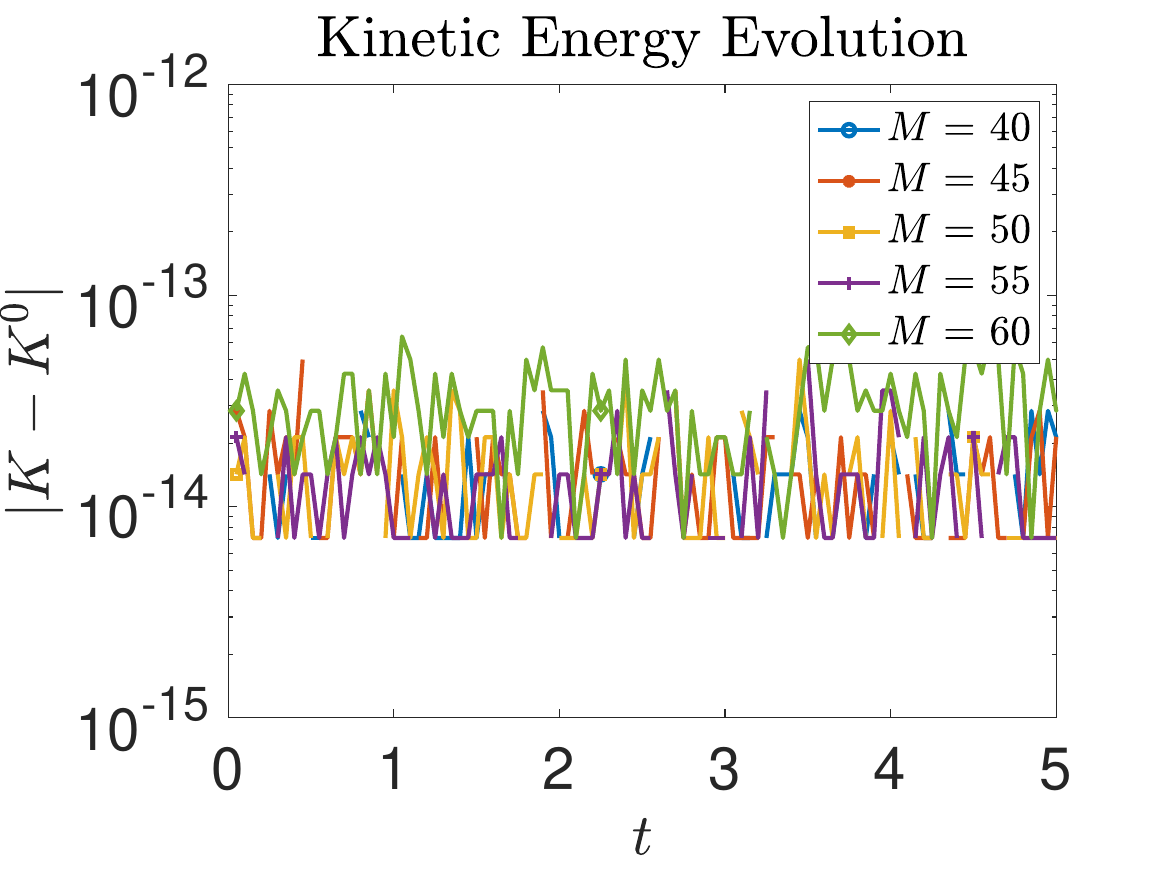}
    \includegraphics[width = .49\textwidth]{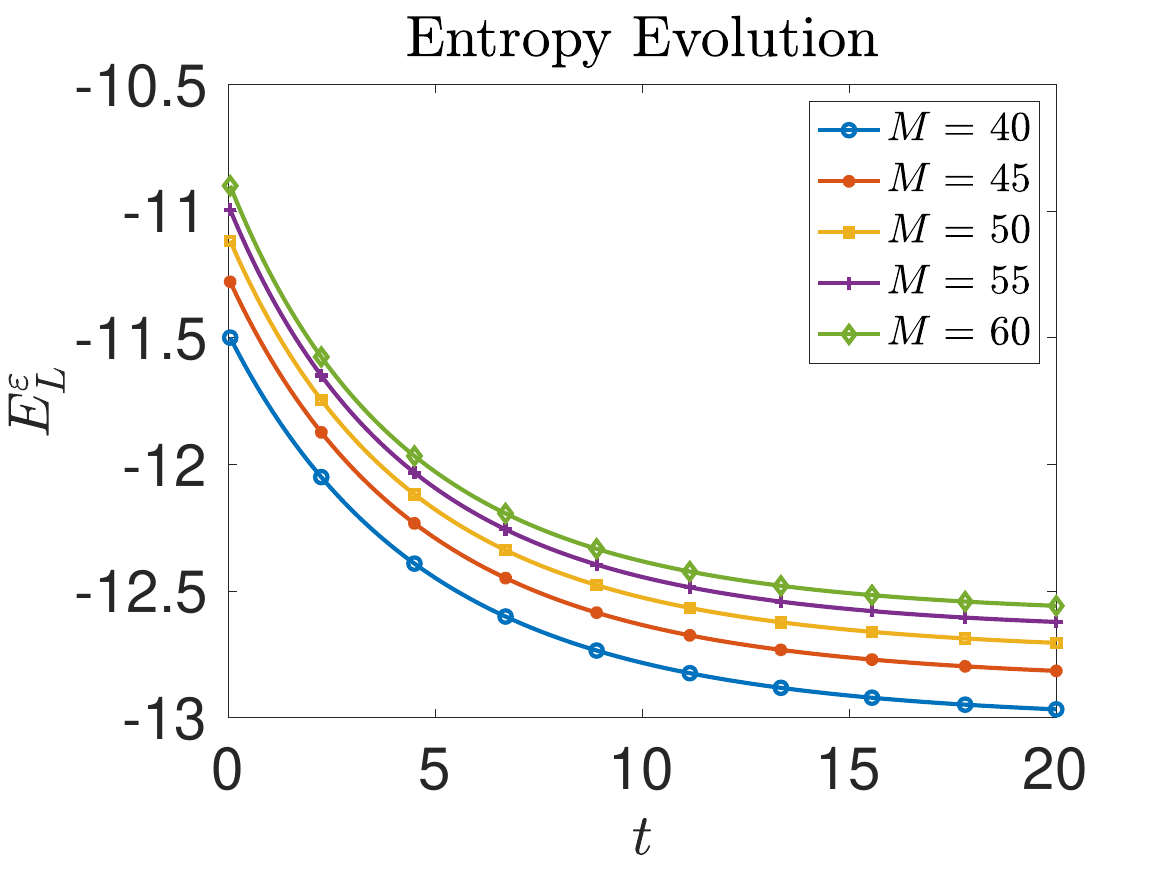}
    \caption{Example \ref{example: Landau eq Coulomb}: Left: Time evolution of the difference between the kinetic energy at time $t$ and its initial value. Right: Time evolution of the energy/entropy for different values of $M$.}
    \label{fig:Coulomb Energy and Entropy}
\end{figure}

 Figure \ref{fig:Landau Coloumb num iter} shows the number of iterations the fixed point method needs to meet the stopping criterion along time.  Table \ref{table:Landau Coloumb iter} shows the mean and maximum number of iterations for different values of $M$.

For the Landau equation with the Coulomb kernel, there is a great interest to search for additional dissipative quantities. For example, the recent breakthrough \cite{GS23} proved that the Fisher information is monotonically decreasing over time. Although our numerical method is not designed to capture this property, for exploration purpose we plot the time evolution of this quantity as well as the energy/entropy dissipation rate (which is conjectured to decay). The discrete Fisher information is given by 
\begin{equation*}
    F^{\varepsilon}(f^N) = \sum_{p = 1}^N\frac{1}{w_p}\left| \overline{\nabla_{\bbx_p} E_L^{\varepsilon}}(\bm X^{n+1},\bm X^n) \right|^2,
\end{equation*}
and the discrete energy/entropy dissipation rate is given by
\begin{equation*}
\begin{aligned}
D^{\varepsilon}(f^N) &= -\frac{1}{2}\sum_{p,q=1}^N w_p w_q \left(\frac{\overline{\nabla_{\bbx_p} E_L^{\varepsilon}}(\bm X^{n+1},\bm X^n)}{w_p} - \frac{\overline{\nabla_{\bbx_q} E_L^{\varepsilon}}(\bm X^{n+1},\bm X^n)}{w_q} \right) \\
&\hskip 18mm\times A(\overline{\bbx_p^n}-\overline{\bbx_q^n}) \left(\frac{\overline{\nabla_{\bbx_p} E_L^{\varepsilon}}(\bm X^{n+1},\bm X^n)}{w_p} - \frac{\overline{\nabla_{\bbx_q} E_L^{\varepsilon}}(\bm X^{n+1},\bm X^n)}{w_q} \right).
\end{aligned}
\end{equation*}
The left plot of Figure \ref{fig:Coulomb fisher info and dissipation} shows the time evolution of the Fisher information, and the right plot shows the time evolution of the energy/entropy dissipation rate. \\ 
 
\begin{minipage}{0.95\linewidth}
  \begin{minipage}[b]{.45\linewidth}
    \centering
    \includegraphics[width=.99\textwidth]{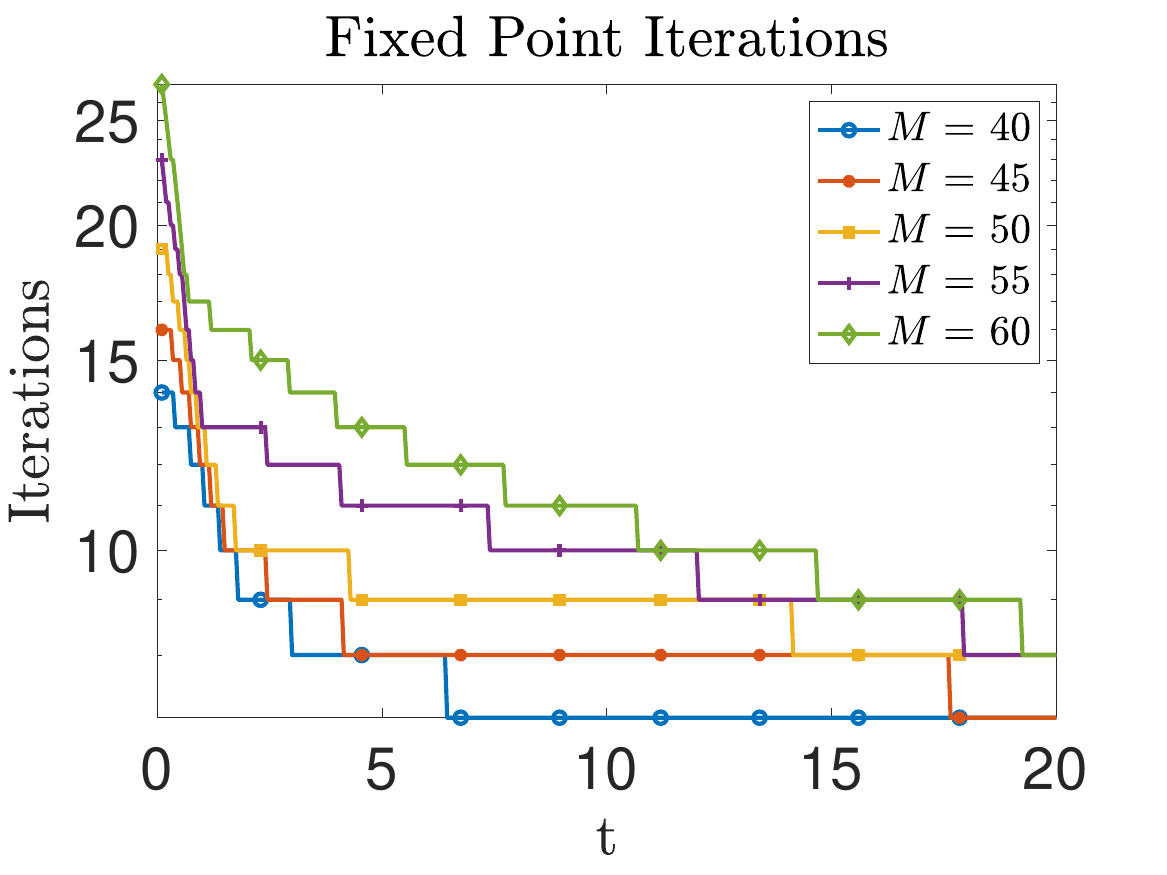}
    \captionof{figure}{Example \ref{example: Landau eq Coulomb}: Number of fixed point iterations required to meet the stopping criterion along time.
    \label{fig:Landau Coloumb num iter}}
  \end{minipage} ~~
  \begin{minipage}[b]{.5\linewidth}
    \centering
    \begin{tabular}{|c|c|c|}\hline
        $M$ & mean number & max number\\ 
          & of iterations  & of iterations\\
          \hline
      $40$ & 7.73  & 14 \\
      \hline
      $45$ & 8.46& 16 \\ 
      \hline
      $50$ & 9.28 & 19 \\
      \hline
      $55$ & 10.43 & 23 \\
      \hline
      $60$ & 11.57 & 28\\
      \hline
    \end{tabular}
    \captionof{table}{Example \ref{example: Landau eq Coulomb}: The average number and maximum number of fixed point iterations required to meet the stopping criterion for different values of $M$.  The time step is $\Delta t = 0.1/2$ for all values of $M$.}%
  \label{table:Landau Coloumb iter}
  \end{minipage}
\end{minipage}

\begin{figure}[!ht]
    \centering
    \includegraphics[width = .49\textwidth]{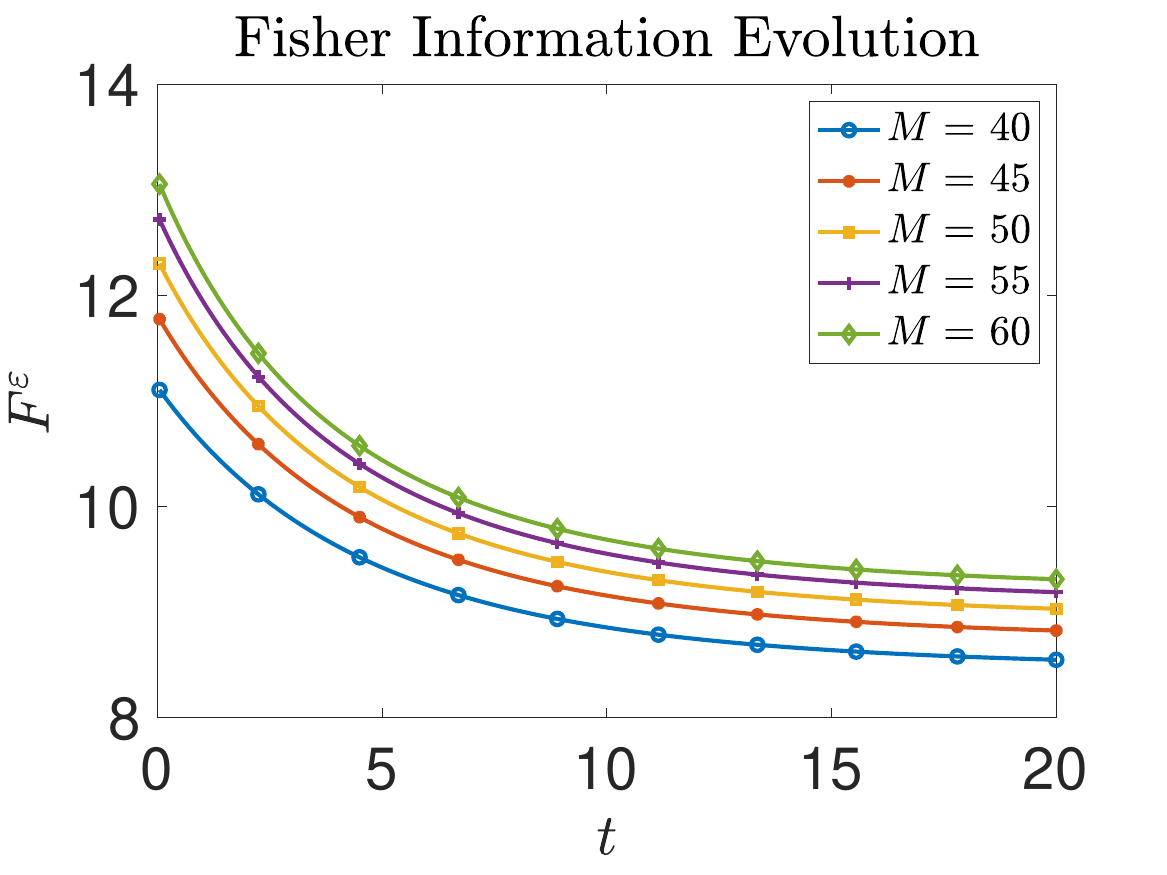}
    \includegraphics[width = .49\textwidth]{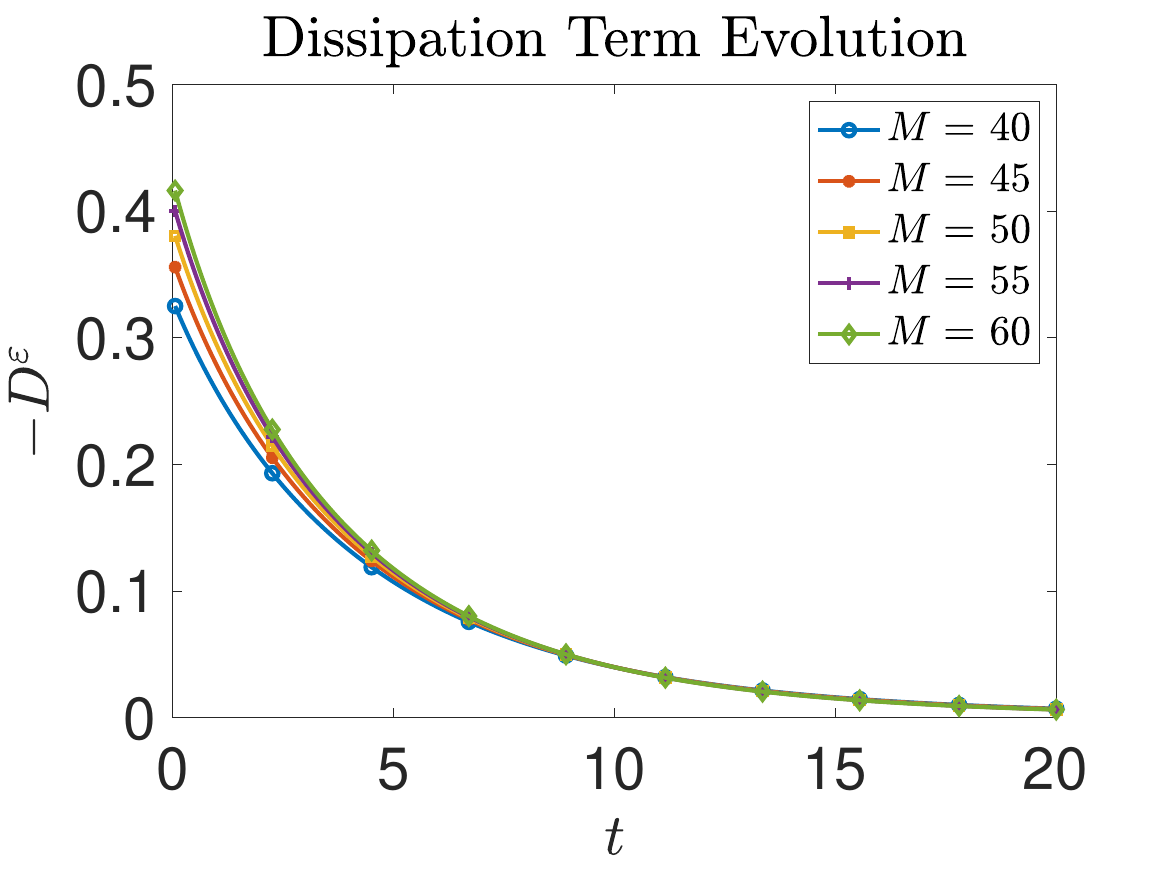}
    \caption{Example \ref{example: Landau eq Coulomb}: Left: Time evolution of the Fisher information. 
 Right: Time evolution of the energy/entropy dissipation rate.}
    \label{fig:Coulomb fisher info and dissipation}
\end{figure}

%\section{Conclusions}
%\label{sec:conclusions}

%Some conclusions here.

\section*{Acknowledgments}
ATSW thanks the Pacific Institute for the Mathematical Sciences for the support provided for JH's visit to the University of Northern British Columbia (UNBC) where this research was initiated when he was at UNBC.

\bibliographystyle{siamplain}
\bibliography{hu_bibtex}
\end{document}